\documentclass[12pt,reqno]{amsart}

\usepackage[a4paper,margin=1.15in]{geometry}
\usepackage{amsmath,amssymb,amsfonts,mathtools}
\usepackage{mathrsfs}
\usepackage{enumitem}
\usepackage{microtype}
\usepackage[colorlinks=true,linkcolor=blue,citecolor=blue,urlcolor=blue]{hyperref}

\numberwithin{equation}{section}

\theoremstyle{plain}
\newtheorem{theorem}{Theorem}[section]
\newtheorem{proposition}[theorem]{Proposition}
\newtheorem{lemma}[theorem]{Lemma}
\newtheorem{corollary}[theorem]{Corollary}

\theoremstyle{definition}
\newtheorem{definition}[theorem]{Definition}

\theoremstyle{remark}
\newtheorem{remark}[theorem]{Remark}

\newcommand{\T}{\mathbb T}
\newcommand{\R}{\mathbb R}

\newcommand{\Q}{\mathbb Q}
\newcommand{\Z}{\mathbb Z}

\newcommand{\dd}{\,\mathrm d}
\newcommand{\ii}{i}
\newcommand{\ee}{\mathrm e}
\newcommand{\sgn}{\operatorname{sgn}}

\newcommand{\eps}{\varepsilon}
\newcommand{\BV}{\operatorname{BV}}
\newcommand{\Ree}{\operatorname{Re}}
\newcommand{\Imm}{\operatorname{Im}}

\newcommand{\Piop}{\Pi}
\newcommand{\Pip}{\Pi_{>0}}
\newcommand{\calG}{\mathcal G}
\newcommand{\calS}{\mathcal S}
\newcommand{\calF}{\mathcal F}
\newcommand{\frakh}{\mathfrak h}

\title[Talbot effect for Benjamin--Ono]{Talbot effect for the periodic Benjamin--Ono equation}

\author{Xi Chen}
\address{Department of Mathematics and Computer Science, University of Basel, Spiegelgasse 1, 4051 Basel, Switzerland}
\email{xi01.chen@unibas.ch}

\keywords{Benjamin--Ono equation, Talbot effect, Tao's gauge transform}

\begin{document}

\begin{abstract}
We study the Talbot effect for the periodic Benjamin--Ono equation with rough
initial data.  Using the smoothing theorem of G\'erard--Kappeler--Topalov \cite{GKT-smoothing} for
Tao's gauge transform first developed in \cite{Tao}, we reduce the singularity analysis to an explicit
quadratic gauge profile.  For general bounded-variation data we obtain a
rational-time gauge-Hardy representation.  For a natural subclass of
Talbot-admissible data, whose initial gauge profile has only finitely many edge
singularities, this representation becomes a finite sum of logarithmic
kernels and one-jump kernels, up to a continuous remainder.  We show that
finite-jump, piecewise smooth BV data are Talbot-admissible; in particular, this
applies to the square wave.  At irrational times, we prove continuity under a
finite Diophantine type condition, and we also exhibit a Liouville obstruction
showing that the corresponding one-sided gauge profile need not be continuous at
all irrational times.  The results give a rigorous structural result that is
qualitatively consistent with the numerical profiles of Alama
Bronsard--Laurens \cite{BL}.
\end{abstract}

\maketitle
\setcounter{tocdepth}{1}
\tableofcontents{}

\section{Introduction}

The periodic Benjamin--Ono equation reads
\begin{equation}\label{BO}
  \partial_t u=\textup{H} [\partial_x^2 u] -\partial_x\big(u^2\big),
  \qquad
  u(0,x)=u_0(x),
  \qquad (t,x)\in\R\times\T, \quad \T=\R/2\pi\Z.
\end{equation}
Here \(u(t,x)\) is real-valued, and \(\textup H\) denotes the Hilbert transform, defined for
\[
 f=\sum_{n \in \mathbb{Z}} \widehat{f}(n) e^{i n x}, \quad \widehat f(n)=\frac1{2\pi}\int_0^{2\pi}f(x)e^{-inx}\,dx
\]
by
\[
  \widehat{\textup H [f]}(n)
  :=
  -i\,\operatorname{sgn}(n)\widehat f(n),
  \qquad n\in\Z,
\]
with \(\operatorname{sgn}( \pm x):= \pm 1\) for any \(x>0\) and $\operatorname{sgn}(0):=0$.  Equivalently,
\[
  \textup H [f](x)
  :=
  \sum_{n\in\Z}
  -i\,\operatorname{sgn}(n)\widehat f(n)e^{inx},
  \qquad
  \widehat f(n)=\frac1{2\pi}\int_0^{2\pi}f(x)e^{-inx}\,dx.
\]
The equation is a nonlocal integrable dispersive model for internal waves.  Its
low-regularity well-posedness theory and integrable structure are now well
developed; in particular, the equation is globally well posed on
\(H^s_r(\T)\) for \(s>-1/2\) \cite{GKT-wp,KLV}, and G\'erard's explicit
formula describes the Fourier coefficients of the solution through the Lax
operator \cite{Gérard-explicit}.

This paper studies the Talbot effect for \eqref{BO} at the level of singularity
structure.  The Talbot effect originates in Talbot's optical observation of
self-imaging behind a diffraction grating \cite{Talbot1836}; Rayleigh later
gave a quantitative analysis of related diffraction-grating self-imaging
phenomena \cite{Rayleigh1881}.  In modern mathematical language, Talbot-type phenomena arise when periodic dispersive evolutions from rough data exhibit exact or fractional revivals at rational times and more irregular, often fractal, profiles at irrational times.

For the linear Schrodinger equation, the rational/irrational dichotomy and its
fractal aspects have been studied extensively.  Berry and Klein's work on
integer, fractional and fractal Talbot effects is a central reference from the
physics side \cite{BerryKlein1996}, while rigorous one-dimensional results are
closely related to Oskolkov's work on Vinogradov-type series
\cite{Oskolkov}.  Kapitanski--Rodnianski studied the spatial regularity of the
Schrodinger fundamental solution on the circle and its dependence on the
arithmetic nature of time \cite{KapRod}; Rodnianski analyzed fractal dimensions
of Schrodinger graphs for bounded-variation data \cite{Rodnianski}.  Taylor
obtained explicit rational-time descriptions on spheres, together with
analogous but less explicit mapping results for Zoll manifolds
\cite{Taylor2003}.  Banica--Vega studied Dirac-mass data for real-line cubic
NLS and related Talbot structure \cite{BanicaVega}, while Eceizabarrena
treated the distributional relation between the free Schrodinger fundamental
solution, Dirac combs, and optical Talbot descriptions
\cite{Eceizabarrena}.

Talbot-type revivals also occur beyond the linear Schrodinger equation. Erdogan--Tzirakis proved Talbot
results for periodic KdV and cubic NLS by combining the corresponding linear
Talbot theory with nonlinear smoothing estimates \cite{ET-KdV,ET-NLS}.  For
BV data, the nonlinear correction is continuous (indeed smoother in the
relevant Sobolev scale).  Nonlinear Talbot effects have also been observed
experimentally in a different nonlinear-optical setting, namely nonlinear photonic crystals
\cite{ZhangNonlinearTalbot}; see also the optical review \cite{WenReview}.

The Benjamin--Ono equation behaves differently from the NLS and KdV cases.
Boulton--Olver--Pelloni--Smith discovered and derived the
rational-time finite-superposition formula for the periodic linear
Benjamin--Ono equation \cite{BOPS}: at rational times the solution is a
finite linear combination of translates of the initial datum and of its
Hilbert transform.  Boulton--Macpherson--Pelloni subsequently gave a short
rigorous proof for \(L^2\) data and developed the corresponding jump, cusp,
and fractal analysis \cite{BMP-linearBO}. Recent numerical work of Alama Bronsard--Laurens gives the first numerical
evidence for a Talbot effect in nonlinear Benjamin--Ono dynamics with
square-wave data, supported by a rigorous \(L^2\)-convergence theorem for their
scheme \cite{BL}.  This convergence theorem validates the approximation in
\(L^2\) on bounded time intervals; it does not by itself certify pointwise
discontinuities, vertical asymptotes, or their amplitudes. They recall that, for square-wave initial data, the linear
Benjamin--Ono evolution at rational times is a finite linear
combination of translates of the initial profile and of its periodic
Hilbert transform
\cite[Theorem~2]{BOPS}\cite[Theorem~1(a)]{BMP-linearBO}.
It may therefore exhibit jump discontinuities and/or logarithmic
vertical asymptotes; the logarithmic form follows from the explicit
Hilbert transform of a step function.  At irrational times,
\cite[Theorem~1(b)]{BMP-linearBO} proves continuity for almost every
time.  Figure~1 of Alama Bronsard--Laurens suggests that the nonlinear
Benjamin--Ono equation exhibits an analogous phenomenon, but they
explicitly leave a rigorous proof of the nonlinear Benjamin--Ono
Talbot effect as an open problem.  Their discussion also indicates why
the usual NLS/KdV smoothing argument is not expected to apply directly
here.  Already at \(t=\pi/2\), the plotted linear and nonlinear profiles
exhibit a discrepancy suggestive of a discontinuity in their limiting
difference, which would preclude membership in \(H^{1/2+}\).  Thus, at
least for square-wave data, the Talbot singularities should not be
sought merely by proving that
\(u_{\rm nonlinear}-u_{\rm linear}\) is continuous.

Our approach is instead to perform the singularity analysis after Tao's gauge
transform.  Tao introduced this gauge in his work on the Benjamin--Ono equation
\cite{Tao}; on the torus, smoothing properties of the gauge equation were
developed by Isom--Mantzavinos--Oh--Stefanov and by
Gérard--Kappeler--Topalov \cite{IMOS,GKT-smoothing}.  For mean-zero data ($\widehat{u}(0) = 0$) let
\begin{equation}\label{gauge-intro}
  \calG(u):=\partial_x\Piop\left(\ee^{-\ii\partial_x^{-1}u}\right)
\end{equation}
be Tao's gauge transform. Here the operator $\partial_x^{-1}$ is given by
\[
\partial_x^{-1}: H_r^s(\mathbb{T}) \rightarrow H_{r,0}^{s+1}(\mathbb{T}), f \mapsto \sum_{n \neq 0} \frac{1}{i n} \widehat{f}(n) e^{i n x},
\]
where
\[
H_{r,0}^s(\mathbb T):=\left\{v \in H_{r}^s(\mathbb T):\widehat{v}(0) = 0\right\}.
\]
The smoothing theorem of Gérard--Kappeler--Topalov \cite[Theorem 1.1]{GKT-smoothing} gives, for suitable \(s\ge0\),
\begin{equation}\label{GKT-intro}
  u(t)=2\Ree\left(\ee^{\ii\partial_x^{-1}u(t)}\,\ii\,w_L(t)\right)+r(t),
\end{equation}
where \(r(t)\) is smoother and
\begin{equation}\label{wL-intro}
  w_L(t,x)=\sum_{n\ge1}\ee^{\ii t(n^2-\langle u_0^2\mid1\rangle)}\widehat{\calG(u_0)}(n)\ee^{\ii nx}.
\end{equation}
Thus the nonlinear Talbot problem is reduced to two separate steps: analyzing
the explicit quadratic gauge profile \(w_L\), and proving that multiplication
by the H\"older factor \(\ee^{\ii\partial_x^{-1}u(t)}\) preserves each candidate
singular location and the class spanned by the logarithmic and jump kernels.
The latter is an elementary but crucial observation: if \(a\in C^\rho\),
\(\rho>0\), then
\begin{equation}\label{holder-freeze-intro}
  a(x)L_\xi(x)=a(\xi)L_\xi(x)+C(x),\qquad C\in C^0(\T),
\end{equation}
where
\begin{equation}\label{Lxi-intro}
  L_\xi(x):=-\log(1-\ee^{\ii(x-\xi)}).
\end{equation}
This freezing of the complex singular coefficient transfers the finite
singular-kernel representation from Tao's gauge profile back to the original
variable.  It may mix the logarithmic and jump kernels or cancel either real
coefficient.  In this way, the H\"older multiplier argument addresses the
gauge-to-\(u\) step left open by Alama Bronsard--Laurens; their
\(L^2\)-convergence theorem does not itself establish this transfer.
\medskip

We now describe the results more concretely.  The first result is a rational-time
representation for general BV initial data.  In that level of generality the answer is
naturally expressed in Tao's gauge variable: the solution is described, up to a
continuous remainder, by finitely many shifted Hardy projections of the initial
gauge derivative, multiplied by the H\"older factor
\(\ee^{\ii\partial_x^{-1}u(t)}\).  This is the appropriate BV statement for the
Benjamin--Ono equation, but it should not be confused with a finite-image Talbot
formula; a general BV function may have infinitely many jumps or a singular
continuous part.

To obtain a finite singular-kernel representation, we impose a finite-edge condition
on the initial gauge profile.  We call the data Talbot-admissible when
\[
  \widehat{\calG(u_0)}(n)
  =
  \sum_{j=1}^{J}
  \frac{c_j\ee^{-\ii n a_j}}{n}
  +
  \rho_n,
\]
with a smoother remainder.  Under this assumption, rational times produce only
finitely many logarithmic kernels and one-jump kernels, modulo a continuous
function.  The condition is not artificial: finite-jump, piecewise sufficiently smooth
BV data satisfy it, and the square wave \(s_0(x)=\operatorname{sgn}(x)\) is the
model example.

For irrational times, the same reduction leads to one-sided quadratic logarithmic
series.  We prove a transfer criterion and verify it for irrational times of finite
Diophantine type by a self-contained Weyl-sum argument.  We also show that the
one-sided gauge series can fail to be continuous for certain Liouville times.  Thus
the arithmetic assumption is a feature of the present gauge-profile method, rather
than a direct analogue of the KdV or NLS Talbot theory.

\subsection{Main results}

All initial data below are assumed real-valued and mean-zero ($\widehat{u}(0) = 0$).  This restriction avoids inessential drift terms and is the natural setting for \eqref{gauge-intro}; the square wave has mean zero.

Our first result applies to arbitrary BV data.  It gives a rational-time representation, but not necessarily a finite jump/logarithmic formula.

\begin{theorem}[Representation for BV data]\label{thm:intro-BV}
Let \(u_0\in\BV(\T)\) be real-valued and satisfy $\widehat{u}_0 (0) = 0$.  Set
\[
  V_0=\partial_x^{-1}u_0,
  \qquad
  g_0=\ee^{-\ii V_0},
  \qquad
  h_0=g_0'=-\ii u_0\ee^{-\ii V_0}.
\]
Let \(u(t)=\calS(t,u_0)\) be the solution of \eqref{BO}.  If \(t/(2\pi)=p/q\in\Q\), then in \(L^2(\T)\) and hence in distributions,
\begin{equation}\label{intro-BV-rep}
  u(t,x)=2\Ree\left(
    a_t(x)\,\ii\,\ee^{-\ii tM}
    \sum_{\ell=0}^{q-1}d_\ell(t)
    (\Pip h_0)\left(x-\frac{2\pi\ell}{q}\right)
  \right)+R_t(x),
\end{equation}
where
\[
  a_t=\ee^{\ii\partial_x^{-1}u(t)},
  \qquad
  M=\langle u_0^2\mid1\rangle,
\]
\(R_t\in C^0(\T)\), \(\Pip\) is the strictly positive Fourier projection defined by
\[
\Pip f = \sum_{n \in \mathbb Z_{>0}} \widehat f(n) e^{ i n x},
\]
and the constants \(d_\ell(t)\) are defined by the finite Fourier expansion
\[
  d_{\ell}(t):=\frac{1}{q} \sum_{n=0}^{q-1} \ee^{i t n^2} \ee^{2 \pi i \ell n / q}.
\]
\end{theorem}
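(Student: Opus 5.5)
Our approach is to start from the Gérard--Kappeler--Topalov decomposition \eqref{GKT-intro}, applied with the regularity index $s$ available for BV data: $\BV(\T)\subset L^\infty\subset L^2$, so $u_0\in H^s_r$ for every $s<1/2$. This gives
\[
u(t)=2\Ree\left(a_t\,\ii\,w_L(t)\right)+r(t).
\]
Here $r(t)$ gains regularity. The first step is to check that, for some admissible $s$, the gain in \cite[Theorem~1.1]{GKT-smoothing} puts $r(t)$ in $H^{\sigma}$ with $\sigma>1/2$, hence in $C^0(\T)$. In the statement this gain is at least $1/2$ or so above $s$, which suffices once $s$ is close to $1/2$. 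The remainder $r(t)$ will form part of $R_t$.

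Next we identify $w_L$. Since $\calG(u_0)=\partial_x\Piop g_0$, we have $\widehat{\calG(u_0)}(n)=\widehat{h_0}(n)$ for $n\ge1$, because $h_0=g_0'$. So
\[
w_L(t)=\ee^{-\ii tM}\sum_{n\ge1}\ee^{\ii tn^2}\widehat{h_0}(n)\ee^{\ii nx}.
\]
We use $M=\langle u_0^2\mid 1\rangle$, matching the paper's normalization, possibly up to a harmless constant convention.

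The key algebraic step is the rational-time identity. When $t=2\pi p/q$, the sequence $n\mapsto \ee^{\ii tn^2}$ is $q$-periodic, since $(n+q)^2-n^2=2nq+q^2$ and $t(2nq+q^2)=2\pi p(2n+q)\in 2\pi\Z$. By finite Fourier inversion on $\Z/q\Z$,
\[
\ee^{\ii tn^2}=\sum_{\ell=0}^{q-1}d_\ell(t)\,\ee^{-2\pi\ii \ell n/q}.
\]
To verify this, substitute the definition of $d_\ell$ and use $\sum_\ell \ee^{2\pi\ii\ell(m-n)/q}=q\,\delta_{m\equiv n}$. Inserting it into the series gives
\[
w_L(t,x)=\ee^{-\ii tM}\sum_{\ell}d_\ell(t)\,(\Pip h_0)\!\left(x-\tfrac{2\pi\ell}{q}\right).
\]
The series converges in $L^2$, since $h_0=-\ii u_0\ee^{-\ii V_0}\in L^\infty\subset L^2$. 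Multiplying by $a_t\in L^\infty$ preserves $L^2$ convergence, which yields \eqref{intro-BV-rep} with $R_t=r(t)$. The identity therefore holds in $L^2$ and in distributions.

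The main obstacle is the first step. We must confirm that the GKT theorem applies at a level $s$ reachable by BV data, and that the smoothing exponent pushes $r(t)$ above $H^{1/2}$ so that it is continuous, uniformly in the relevant time. If the stated gain is only $\sigma<1/2+$, we would try two fallbacks. The first is to exploit BV-specific structure: $\widehat{u_0}(n)=O(1/n)$ places $u_0$ in a Fourier--Lebesgue space $\calF L^{\infty,1}$ with more room. The second is to iterate the smoothing statement. A secondary point is to check that $\partial_x^{-1}u(t)$ is well defined for all $t$, i.e.\ that the mean zero of $u(t)$ is preserved, so that $a_t$ makes sense.
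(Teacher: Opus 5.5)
Your proposal is correct and follows essentially the same route as the paper: the G\'erard--Kappeler--Topalov decomposition, the identification $\widehat{\calG(u_0)}(n)=\widehat{h_0}(n)$ for $n\ge1$, and the finite Fourier expansion of the $q$-periodic phase $n\mapsto\ee^{\ii tn^2}$, giving the representation with $R_t=r(t)$. The step you flag as the main obstacle is settled by the form of the smoothing theorem the paper uses, $r(t)\in H^{3s}(\T)$, which embeds in $C^0(\T)$ for any $s\in(1/6,1/2)$. One correction: $u_0\in H^s$ for all $s<1/2$ follows from $\widehat{u_0}(n)=O(1/n)$, not from $\BV\subset L^\infty\subset L^2$ as your first sentence suggests.
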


Theorem \ref{thm:intro-BV} is the correct BV-level result for rational time.  It does not imply that a general BV datum has only finitely many singularities, because the Hardy projection of a general BV function may contain singularities supported on infinitely many points.  To obtain a finite singular-kernel representation we impose a finite-edge condition on the gauge profile.

\begin{definition}[Talbot-admissibility]\label{def:intro-admissible}
  A datum \(u_0\in L^2_{r,0}(\T)\) is called \emph{Talbot-admissible} if, for \(w_0=\calG(u_0)\), there are finitely many points \(a_1,\dots,a_J\in\T\), complex coefficients \(c_1,\dots,c_J\), and a sequence \(\rho=(\rho_n)_{n\ge1}\in\frakh^{1/2+\eps}\) for some \(\eps>0\), such that
\begin{equation}\label{admissible-intro}
  \widehat w_0(n)=\sum_{j=1}^J\frac{c_j\ee^{-\ii n a_j}}{n}+\rho_n,
  \qquad n\ge1.
\end{equation}
Here \(\frakh^s:=\left\{z=\left(z_n\right)_{n \geq 1} \subset \mathbb{C} \mid \left(\sum_{n \geq 1} n^{2 s}\left|z_n\right|^2\right)^{1 / 2}<\infty\right\}\).
\end{definition}
\begin{proposition}[Automatic regularity]\label{prop:auto-regularity}
Every Talbot-admissible datum satisfies
\[
  u_0\in H^s_{r,0}(\T)\qquad\text{for every }0\le s<\frac12.
\]
Equivalently, \(u_0\in H^{1/2-}_{r,0}(\T):=
\bigcap_{0\le s<1/2}H^s_{r,0}(\T)\).
\end{proposition}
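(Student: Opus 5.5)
The plan is to recover the positive-frequency part of \(u_0\) from \(w_0=\calG(u_0)\) through the pointwise identity linking \(u_0\), \(g_0=\ee^{-\ii V_0}\) and \(g_0'\). Since \(u_0\) is real-valued and mean-zero, \(\widehat u_0(-n)=\overline{\widehat u_0(n)}\). So it suffices to show that \((\widehat u_0(n))_{n\ge1}\in\frakh^s\) for every \(s<1/2\).

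\textbf{Step 1 (regularity of the gauge profile).} A sequence \(c\,\ee^{-\ii na}/n\) lies in \(\frakh^s\) exactly when \(\sum n^{2s-2}<\infty\), that is, when \(s<1/2\). Also \(\rho\in\frakh^{1/2+\eps}\subset\frakh^s\). Hence \eqref{admissible-intro} gives \(w_0\in H^s\) for every \(s<1/2\), since \(w_0\) has only positive frequencies. From the a priori assumption \(u_0\in L^2\) we get \(V_0=\partial_x^{-1}u_0\in H^1\), hence \(g_0\in H^1\) and \(\overline{g_0}=\ee^{\ii V_0}\in H^1\). Split \(g_0=P+N\), where \(P=\Piop g_0\) carries the nonnegative frequencies and \(N\) the negative ones. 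Then \(P'=w_0\).

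\textbf{Step 2 (identity).} Differentiating \(g_0=\ee^{-\ii V_0}\) gives \(g_0'=-\ii u_0 g_0\). Multiplying by \(\overline{g_0}\) and using \(|g_0|=1\), we get
\[
 u_0=\ii\, g_0'\,\overline{g_0}=\ii\, w_0\,\overline{g_0}+\ii\, N'\,\overline{g_0}.
\]
Applying \(\Pip\) to both sides, the first term is controlled because multiplication by an \(H^1(\T)\) function preserves \(H^s\) for \(0\le s\le 1\). This gives \(\Pip(\ii w_0\overline{g_0})\in H^s\) for all \(s<1/2\).

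\textbf{Step 3 (the main obstacle: the term \(\Pip(\ii N'\overline{g_0})\)).} Here \(N'\) is only in \(L^2\), so no product estimate applies directly. We have to exploit the frequency structure instead. Write \(\overline{g_0}=\overline P+\overline N\), where \(\overline P\) has nonpositive frequencies and \(\overline N\) has positive ones. A positive output frequency \(n\) can only arise from \(N'\) at frequency \(-k\) paired with \(\overline N\) at frequency \(n+k\), with \(k\ge1\). Set \(b_k:=k|\widehat N(-k)|\in\ell^2\), which is possible since \(N\in H^1\). The \(n\)-th coefficient is then bounded by
\[
 \sum_{k\ge1} b_k\,\frac{b_{n+k}}{n+k}\le \|b\|_{\ell^2}\Big(\sum_{m>n}\frac{b_m^2}{m^2}\Big)^{1/2}\le \frac{\|b\|_{\ell^2}\,\tau_n}{n},
\]
where \(\tau_n:=(\sum_{m>n}b_m^2)^{1/2}\to0\). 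Thus this coefficient is \(O(1/n)\), and \(\sum n^{2s}\,n^{-2}<\infty\) for \(s<1/2\), so this term also lies in \(H^s\) for every \(s<1/2\). This high–high interaction is the delicate point: its decay rate is exactly the threshold \(1/n\), which is why the argument reaches every \(s<1/2\) but not \(s=1/2\). I would double-check the Cauchy–Schwarz bookkeeping carefully here.

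\textbf{Step 4 (conclusion).} Combining Steps 2 and 3 gives \(\Pip u_0\in H^s\) for every \(0\le s<1/2\). By reality and the mean-zero condition, \(u_0\in H^s_{r,0}(\T)\) for all such \(s\), which is the claim. As an alternative, if available, one could instead invoke the fact from \cite{GKT-smoothing} that \(\calG\) is a homeomorphism between the appropriate scales \(H^s_{r,0}\) and \(\frakh^s\). The direct bootstrap above avoids relying on that fact.
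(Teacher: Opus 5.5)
Your argument is correct, and it takes a genuinely different route from the paper.

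\textbf{The paper's route.} The paper never uses the identity \(u_0=\ii g_0'\overline{g_0}\). Instead it works in the Birkhoff coordinates of G\'erard--Kappeler--Topalov, in three steps:
\begin{enumerate}[label=(\roman*)]
\item It uses that the gauge-defined sequence \(\Phi_0(u_0)=\bigl(-\ii n^{-1/2}\widehat{\calG(u_0)}(n)\bigr)_{n\ge1}\) differs from the Birkhoff map \(\Phi(u_0)\) by an element of \(\frakh^1\).
\item It reads off from the admissible tail that \(\Phi_0(u_0)\), and hence \(\Phi(u_0)\), lies in \(\frakh^r\) for every \(r<1\).
\item It concludes by the scale property \(\Phi^{-1}(\frakh^{s+1/2})=H^s_{r,0}(\T)\).
\end{enumerate}
This is very short, but it rests on deep integrable-systems input. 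It needs the existence of the Birkhoff map, the one-smoothing property of \(\Phi-\Phi_0\), and the exact Sobolev mapping properties of \(\Phi\).

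\textbf{Your route.} Your proof is self-contained and elementary. It uses only three ingredients:
\begin{enumerate}[label=(\roman*)]
\item the hypothesis \(u_0\in L^2\), which places \(g_0\), \(\overline{g_0}\) and \(N\) in \(H^1\);
\item boundedness of multiplication by \(H^1\) functions on \(H^s\) for \(0\le s\le 1\);
\item the bilinear bound for the high--high term \(\Pi_{>0}(N'\overline{g_0})\).
\end{enumerate}
Your bookkeeping in that last step is right. In fact the cruder estimate \(|\widehat{(N'\overline{g_0})}(n)|\le\|b\|_{\ell^2}^2/(n+1)\) already suffices, so the refinement \(\tau_n\to0\) is not needed.

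\textbf{What each buys.} Admissibility enters your argument only through \(w_0\in H^s\) for \(s<1/2\). So you actually prove a more general fact: for any \(u_0\in L^2_{r,0}(\T)\) and \(0\le s<1/2\), \(\calG(u_0)\in H^s\) implies \(u_0\in H^s\). This holds with no reference to Birkhoff coordinates. The paper's route gains brevity by leaning on established integrable structure. Yours gains transparency and independence from that machinery.
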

This condition is exactly the Fourier expression of finitely many logarithmic edges in the gauge variable, since \(\ee^{-\ii n a}/n\) is the positive Fourier tail of \(L_a\).  Under this condition one obtains a finite singular-kernel theorem for rational time.

\begin{theorem}[Talbot effect at rational times]\label{thm:intro-rational}
Let \(u_0\in L^2_{r,0}(\T)\) be Talbot-admissible.  If \(t/(2\pi)\in\Q\), then there are finitely many points \(\xi_\nu(t)\in\T\), real coefficients \(A_\nu(t),B_\nu(t)\), and \(R_t\in C^0(\T)\), such that
\begin{equation}\label{intro-rational-formula}
  u(t,x)=\sum_\nu\Big(A_\nu(t)\mathcal C_{\xi_\nu(t)}(x)+B_\nu(t)\mathcal J_{\xi_\nu(t)}(x)\Big)+R_t(x)
\end{equation}
 in distributions.  Equivalently, after choosing the standard representatives of the singular kernels, the identity holds pointwise away from the finite candidate set.  Here
\[
  \mathcal C_\xi(x)=-\log\left|2\sin\frac{x-\xi}{2}\right|,
  \qquad
  \mathcal J_\xi(x)=\frac{\pi-y_\xi(x)}2,
\]
where \(y_\xi(x)\in(0,2\pi)\) and \(y_\xi(x)\equiv x-\xi\pmod{2\pi}\).
\end{theorem}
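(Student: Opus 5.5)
The plan is to combine the Gérard--Kappeler--Topalov decomposition \eqref{GKT-intro} with an explicit rational-time evaluation of the quadratic gauge profile \(w_L\), and then to transfer the resulting finite kernel structure back through the Hölder multiplier by \eqref{holder-freeze-intro}. By Proposition~\ref{prop:auto-regularity}, \(u_0\in H^s_{r,0}\) for some \(s\in(0,1/2)\). So \cite[Theorem 1.1]{GKT-smoothing} applies and gives \(u(t)=2\Ree(a_t\,\ii\,w_L(t))+r(t)\) with \(a_t=\ee^{\ii\partial_x^{-1}u(t)}\) and \(r(t)\) smoother. I would check that the gain in \(r(t)\) is more than \(1/2\) derivative over \(s\), so that \(r(t)\in H^{1/2+}\subset C^0\). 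If the gain is not enough for a fixed \(s\), I would push \(s\) toward \(1/2\). Since \(\partial_x^{-1}u(t)\in H^{1+s}\subset C^{1/2+s}\), we also have \(a_t\in C^\rho\) for some \(\rho>0\).

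Next I evaluate \(w_L\) at \(t=2\pi p/q\). The phase \(n\mapsto \ee^{\ii tn^2}\) is \(q\)-periodic in \(n\), so its discrete Fourier expansion is \(\ee^{\ii tn^2}=\sum_{\ell=0}^{q-1}d_\ell(t)\ee^{-2\pi\ii\ell n/q}\), with \(d_\ell\) as in Theorem~\ref{thm:intro-BV}. Substituting this into \eqref{wL-intro} gives
\[
w_L(t,x)=\ee^{-\ii tM}\sum_{\ell=0}^{q-1}d_\ell(t)\,w_0^+\left(x-\frac{2\pi\ell}{q}\right),
\qquad w_0^+:=\Pip w_0 .
\]
The admissibility condition \eqref{admissible-intro} then gives \(w_0^+=\sum_j c_j L_{a_j}+\rho^\sharp\), where \(\rho^\sharp=\sum_n\rho_n\ee^{\ii nx}\) lies in \(H^{1/2+\eps}\subset C^0\). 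Here I use that \(\sum_{n\ge1}\ee^{\ii n(x-a)}/n=L_a(x)\). Hence \(w_L(t)\) equals a finite sum \(\sum_\nu \gamma_\nu L_{\xi_\nu}\), with \(\xi_\nu=a_j+2\pi\ell/q\) and \(\gamma_\nu\in\mathbb C\), plus a continuous function.

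I then multiply by \(a_t\). By \eqref{holder-freeze-intro}, \(a_tL_\xi=a_t(\xi)L_\xi+C\) with \(C\in C^0\). I would prove this as follows: \(L_\xi\) has only a logarithmic singularity, so \((a_t(x)-a_t(\xi))L_\xi(x)=O(|x-\xi|^\rho\log|x-\xi|)\to0\), which is continuous at \(\xi\). Away from \(\xi\), the product is continuous because \(L_\xi\) is. This yields \(u(t)=2\Ree\sum_\nu \beta_\nu \ii L_{\xi_\nu}+C^0\), with \(\beta_\nu=\ee^{-\ii tM}a_t(\xi_\nu)\gamma_\nu\). Finally, \(L_\xi(x)=-\log|2\sin\frac{x-\xi}{2}|+\ii\frac{\pi-y_\xi(x)}{2}=\mathcal C_\xi+\ii\mathcal J_\xi\). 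Taking real parts gives \(A_\nu=-2\Imm\beta_\nu\) and \(B_\nu=-2\Ree\beta_\nu\), which is the formula \eqref{intro-rational-formula}. The equality holds in \(L^2\), hence in distributions, and it holds pointwise away from \(\{\xi_\nu\}\) since all terms are continuous there.

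I expect the main obstacle to be checking the regularity bookkeeping in the GKT theorem: that the remainder \(r(t)\) is genuinely continuous for data only in \(H^{1/2-}\), and that the phase convention \(\ee^{\ii t(n^2-M)}\) matches the one used in the theorem. The kernel identification and the freezing step are elementary.
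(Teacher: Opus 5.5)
Your proposal is correct and follows essentially the same route as the paper. Proposition~\ref{prop:auto-regularity} lets one apply the GKT decomposition with some $s_\ast\in(1/6,1/2)$, so that $r(t)\in H^{3s_\ast}\subset C^0$ and $a_t\in C^\rho$. The $q$-periodic expansion of $\ee^{\ii tn^2}$ together with the admissible tail turns $w_L(t)$ into finitely many kernels $L_{a_j+2\pi\ell/q}$ plus an absolutely convergent, hence continuous, remainder. H\"older freezing of $a_t$ at each candidate point, followed by $L_\xi=\mathcal C_\xi+\ii\mathcal J_\xi$, then gives the real coefficients.

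The differences are cosmetic. You expand in $\ell$ before inserting the tail rather than after. Also, if your $\gamma_\nu$ already contains the factor $\ee^{-\ii tM}$, then $\beta_\nu$ should be $a_t(\xi_\nu)\gamma_\nu$ without a second such factor.
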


The points \(\xi_\nu(t)\) form a finite candidate singular set.  The theorem
does not assert that every displayed coefficient is nonzero; nonvanishing at
an individual candidate point requires a separate coefficient calculation.

The admissibility condition is automatic for the class of BV data most relevant to Talbot phenomena, namely finite-edge profiles.

\begin{proposition}[Finite-jump data are admissible]\label{thm:intro-finite-jump}
Let \(u_0\) be real-valued, mean-zero, piecewise \(C^{1,\eta}\) on \(\T\), with finitely many jump points \(a_1,\dots,a_J\), where \(0<\eta\le1\).  Then \(u_0\) is Talbot-admissible.  More precisely, if \(V_0=\partial_x^{-1}u_0\), then
\begin{equation}\label{intro-finite-tail}
  \widehat{\calG(u_0)}(n)=
  -\frac1{2\pi n}\sum_{j=1}^J [u_0]_{a_j}\ee^{-\ii V_0(a_j)}\ee^{-\ii n a_j}
  +O(n^{-1-\eta}),
\end{equation}
here
\([u_0]_{a_j}:=u_0(a_j+)-u_0(a_j-)\). Consequently, Theorem \ref{thm:intro-rational} applies to every such datum.
\end{proposition}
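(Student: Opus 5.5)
The plan is to compute $\widehat{\calG(u_0)}(n)$ directly for $n\ge1$. Since $\calG(u_0)=\partial_x\Piop(g_0)$ with $g_0=\ee^{-\ii V_0}$, for $n\ge1$ we have
\[
\widehat{\calG(u_0)}(n)=\ii n\,\widehat{g_0}(n)=\widehat{g_0'}(n)=\widehat{h_0}(n),
\qquad h_0=-\ii u_0\ee^{-\ii V_0}.
\]
So it suffices to expand the Fourier coefficients of $h_0$. The function $V_0=\partial_x^{-1}u_0$ is a periodic primitive of $u_0$, which exists because $u_0$ has mean zero. Since $u_0$ is bounded, $V_0$ is Lipschitz, and on each arc between consecutive jump points it is $C^{2,\eta}$. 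Hence $h_0$ is piecewise $C^{1,\eta}$ with the same jump set $\{a_j\}$. Because $\ee^{-\ii V_0}$ is continuous, the jumps are
\[
[h_0]_{a_j}=-\ii\,[u_0]_{a_j}\ee^{-\ii V_0(a_j)}.
\]

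Next, integrate by parts on each arc $(a_j,a_{j+1})$, using $\widehat f(n)=\frac1{2\pi}\int f\ee^{-\ii nx}$. Summing the boundary terms gives
\[
\widehat{h_0}(n)=\frac{1}{2\pi \ii n}\sum_j [h_0]_{a_j}\ee^{-\ii n a_j}+\frac{1}{\ii n}\widehat{(h_0')_{\rm pw}}(n),
\]
where $(h_0')_{\rm pw}$ is the piecewise derivative. Substituting the jump formula, the first term equals $-\frac{1}{2\pi n}\sum_j[u_0]_{a_j}\ee^{-\ii V_0(a_j)}\ee^{-\ii n a_j}$, which is exactly \eqref{intro-finite-tail}, with $c_j=-\frac1{2\pi}[u_0]_{a_j}\ee^{-\ii V_0(a_j)}$.

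The main step is to show $\widehat{(h_0')_{\rm pw}}(n)=O(n^{-\eta})$. The function $(h_0')_{\rm pw}$ is piecewise $C^{0,\eta}$ with finitely many jumps, so I would use the standard estimate for such functions. Split it as a finite combination of the sawtooth kernels $\mathcal J_{a_j}$, whose coefficients are $O(1/n)$, plus a globally $C^{0,\eta}$ function. For the latter, the translation trick $\widehat f(n)=-\frac1{4\pi}\int (f(x)-f(x+\pi/n))\ee^{-\ii nx}\dd x$ gives $O(n^{-\eta})$. Since $\eta\le1$, both pieces are $O(n^{-\eta})$, and therefore the remainder is $\rho_n=O(n^{-1-\eta})$.

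Finally, $\sum n^{1+2\eps}|\rho_n|^2\lesssim\sum n^{1+2\eps-2-2\eta}<\infty$ whenever $\eps<\eta$, for instance $\eps=\eta/2$. Hence $\rho\in\frakh^{1/2+\eps}$, so $u_0\in L^2_{r,0}$ is Talbot-admissible in the sense of Definition \ref{def:intro-admissible}, and Theorem \ref{thm:intro-rational} then applies. The only delicate point is the bookkeeping of boundary-term signs and orientation, so that the coefficient matches \eqref{intro-finite-tail} exactly. A check against $u_0=\operatorname{sgn}$ confirms it: with $V_0(0)=0$ one gets $[u_0]_0=2$, giving $\widehat{h_0}(n)\approx -\frac{1}{\pi n}$.
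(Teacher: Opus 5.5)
Your proposal is correct and follows essentially the paper's route. You identify $\widehat{\calG(u_0)}(n)=\widehat{h_0}(n)$, read off the leading term $\frac{1}{2\pi\ii n}\sum_j[h_0]_{a_j}\ee^{-\ii na_j}$ from the jumps $[h_0]_{a_j}=-\ii[u_0]_{a_j}\ee^{-\ii V_0(a_j)}$, and bound the remainder by a half-period translation argument. The only difference is cosmetic: the paper subtracts sawtooth functions from $h_0$ first and applies an $L^1$ translation bound to the piecewise H\"older derivative of the continuous remainder, whereas you integrate by parts first and subtract sawtooths from $(h_0')_{\rm pw}$.

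There are two harmless slips:
\begin{itemize}
\item The translation identity should read $\widehat f(n)=\frac{1}{4\pi}\int(f(x)-f(x+\pi/n))\ee^{-\ii nx}\dd x$, without the minus sign. This does not affect the bound.
\item In your square-wave check, the primitive $\partial_x^{-1}s_0=|x|-\frac{\pi}{2}$ has $V_0(0)=-\frac{\pi}{2}$, not $0$. The jump at $\pi$ also contributes. With both corrections your formula gives $-\frac{\ii(1+(-1)^n)}{\pi n}$, as in the paper, rather than $-\frac{1}{\pi n}$.
\end{itemize}
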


For the square wave \(s_0(x)=\sgn(x)\) on \((-\pi,\pi)\), extended periodically, this gives
\begin{equation}\label{intro-square-tail}
  \widehat{\calG(s_0)}(n)=-\frac{\ii(1+(-1)^n)}{\pi n}+O(n^{-3}).
\end{equation}

For irrational times, the relevant object is the one-sided quadratic logarithmic series
\begin{equation}\label{F-alpha-intro}
  \calF_\alpha(\beta):=\sum_{n\ge1}\frac{\ee^{2\pi\ii(\alpha n^2+\beta n)}}{n}.
\end{equation}
Here \(\beta\) is regarded modulo \(1\); we write
\(\T_1:=\R/\Z\) and
\(\|\theta\|_{\T_1}:=\min_{k\in\Z}|\theta-k|\).
The exact transfer statement is the following.

\begin{theorem}[Irrational transfer criterion]\label{thm:intro-irr-criterion}
Let \(u_0\) be Talbot-admissible with tail \eqref{admissible-intro}, and set \(\alpha=t/(2\pi)\).  If
\begin{equation}\label{intro-finite-combo}
  x\longmapsto \sum_{j=1}^J c_j\calF_\alpha\left(\frac{x-a_j}{2\pi}\right)
\end{equation}
has a continuous representative, then \(u(t,\cdot)\in C^0(\T)\).
\end{theorem}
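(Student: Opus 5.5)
We prove the criterion by running the reduction \eqref{GKT-intro}--\eqref{wL-intro} at the fixed irrational time and checking that each piece is continuous. By Proposition~\ref{prop:auto-regularity}, \(u_0\in H^s_{r,0}(\T)\) for every \(s<1/2\), so the smoothing theorem of G\'erard--Kappeler--Topalov \cite[Theorem 1.1]{GKT-smoothing} applies for some \(s\in(0,1/2)\). It gives
\[
u(t)=2\Ree\left(a_t\,\ii\,w_L(t)\right)+r(t),
\qquad a_t=\ee^{\ii\partial_x^{-1}u(t)},
\]
with \(r(t)\in H^{s+\delta}\) for some \(\delta>0\). Since the gain only takes \(r(t)\) just past \(H^{1/2}\) if \(s\) is close to \(1/2\), I would fix \(s\) so that \(s+\delta>1/2\). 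Then \(r(t)\in H^{1/2+}\subset C^0(\T)\).

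\emph{Step 1: the remainder.} This is where I expect the real work. One must check from \cite{GKT-smoothing} that the smoothing gain \(\delta\) is uniform as \(s\uparrow 1/2\), for instance \(\delta\) close to \(1/2\). If the gain is weaker, I would instead iterate the argument, or use the explicit structure of \(r\) as quadratic expressions in \(w_L\) and \(a_t\), to reach continuity.

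\emph{Step 2: the multiplier.} Since \(u(t)\in H^s\), the primitive \(\partial_x^{-1}u(t)\) lies in \(H^{1+s}\subset C^{1/2+s}\). Hence \(a_t\) is H\"older continuous, and in particular continuous. It therefore suffices to show that \(w_L(t)\) has a continuous representative, because a product of continuous functions is continuous.

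\emph{Step 3: splitting \(w_L(t)\).} Insert the admissible tail \eqref{admissible-intro} into \eqref{wL-intro} and write \(\alpha=t/2\pi\). This gives
\[
w_L(t,x)=\ee^{-\ii tM}\Big(\sum_{j}c_j\calF_\alpha\big(\tfrac{x-a_j}{2\pi}\big)+\sum_{n\ge1}\ee^{\ii tn^2}\rho_n\ee^{\ii nx}\Big),
\]
where \(M=\langle u_0^2\mid1\rangle\). To justify the identity, note that \(\ee^{\ii tn^2}\ee^{\ii n(x-a_j)}/n=\ee^{2\pi\ii(\alpha n^2+\beta n)}/n\) with \(\beta=(x-a_j)/2\pi\). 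Both sums converge in \(L^2\), so the identity holds in \(L^2\).

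\emph{Step 4: the two pieces of \(w_L(t)\).} The second sum has coefficients \(\ee^{\ii tn^2}\rho_n\), whose moduli equal \(|\rho_n|\). Since \(\rho\in\frakh^{1/2+\eps}\), this sum lies in \(H^{1/2+\eps}\subset C^0\); equivalently, Cauchy--Schwarz gives absolute convergence of the Fourier series. The first sum is exactly \eqref{intro-finite-combo}, which by hypothesis has a continuous representative. Therefore \(w_L(t)\) is continuous, and by Steps 1 and 2 so is \(u(t)\).
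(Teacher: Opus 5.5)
Your proof is the paper's argument. You split $w_L(t)$ via the admissible tail into $\ee^{-\ii tM}\sum_j c_j\calF_\alpha((x-a_j)/2\pi)$ plus an absolutely convergent, hence continuous, remainder, and then use continuity of $a_t$ and $r(t)$ in \eqref{GKT-main}. The one step you leave open, Step 1, is closed by the exponent already stated in Proposition~\ref{prop:GKT}: $r(t)\in H^{3s}$, a gain of $2s$ that improves rather than degenerates as $s\uparrow\frac12$, so any $s_\ast\in(\frac16,\frac12)$ (available by Proposition~\ref{prop:auto-regularity}) gives $r(t)\in H^{3s_\ast}\hookrightarrow C^0(\T)$, with no iteration or analysis of the structure of $r$ needed.
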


We prove a self-contained sufficient condition for this criterion.

\begin{theorem}[Talbot effect at irrational times]\label{thm:intro-finite-type}
Let \(u_0\) be Talbot-admissible.  Suppose \(\alpha=t/(2\pi)\) is irrational of finite Diophantine type, i.e. there are \(c_\alpha>0\) and \(\kappa_\alpha<\infty\) such that
\begin{equation}\label{finite-type-intro}
  \|m\alpha\|_{\T_1}\ge c_\alpha m^{-\kappa_\alpha},
  \qquad m\ge1.
\end{equation}
Then \(u(t,\cdot)\in C^0(\T)\).  In particular, irrational-time continuity holds for a full-measure set of irrational times.
\end{theorem}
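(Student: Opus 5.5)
By Theorem \ref{thm:intro-irr-criterion}, it suffices to show that for every irrational \(\alpha\) of finite Diophantine type, the one-sided series \(\calF_\alpha(\beta)\) converges uniformly in \(\beta\in\T_1\). Its partial sums are trigonometric polynomials, so the limit is continuous, and finite combinations of translates of \(\calF_\alpha\) stay continuous. The full-measure statement follows because, by Khintchine's theorem (or a Borel--Cantelli argument), the set of \(\alpha\) with \(\|m\alpha\|_{\T_1}\ge c_\alpha m^{-1-\delta}\) for all \(m\) has full measure for any \(\delta>0\). So everything reduces to a uniform Cauchy estimate for \(\calF_\alpha\).

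Split the series dyadically into blocks \(N\le n<2N\). Set
\[
  S(N',\beta)=\sum_{N\le n<N'}\ee^{2\pi\ii(\alpha n^2+\beta n)}.
\]
Abel summation bounds the block by \(N^{-1}\max_{N'\le 2N}|S(N',\beta)|\). It is then enough to prove a Weyl-sum bound
\[
  \sup_{\beta}\,\max_{N'\le 2N}|S(N',\beta)|\lesssim N^{1-\delta}
\]
for some \(\delta=\delta(\kappa_\alpha)>0\). The dyadic blocks then sum geometrically, \(\sum_N N^{-\delta}<\infty\), uniformly in \(\beta\), which gives uniform convergence.

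For the Weyl bound, apply one step of Weyl differencing, i.e.\ van der Corput: squaring \(|S|\) and expanding gives
\[
  |S|^2\le N+2\sum_{1\le h<N}\Big|\sum_{n}\ee^{2\pi\ii(2\alpha h n)}\Big|
  \lesssim N+\sum_{1\le h<N}\min\big(N,\|2h\alpha\|_{\T_1}^{-1}\big).
\]
The linear phase \(\beta n\) cancels in the difference, so the bound is automatically uniform in \(\beta\). This step is routine.

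The main obstacle is the sum
\[
  \Sigma_N=\sum_{h<N}\min\big(N,\|2h\alpha\|_{\T_1}^{-1}\big).
\]
I would estimate it with the standard spacing argument. Let \(\tau=c_\alpha (2H)^{-\kappa_\alpha}\). For \(h\ne h'\) in a window of length \(H\), the finite-type condition \eqref{finite-type-intro} gives \(\|2(h-h')\alpha\|_{\T_1}\ge\tau\). Hence the points \(2h\alpha\bmod 1\) in such a window are \(\tau\)-separated, and at most \(O(1)\) of them fall in each interval of length \(\tau\). Taking \(H\approx\tau^{-1}\) is incompatible when \(\kappa_\alpha>1\), so I would instead choose \(H=N^{\theta}\) with \(\theta\) small. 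Each window then contributes
\[
  \lesssim N\cdot 1+\sum_{k\le\tau^{-1}}\frac{1}{k\tau}\lesssim N+\tau^{-1}\log N \lesssim N+N^{\theta\kappa_\alpha}\log N,
\]
and there are \(N/H\) windows. This gives \(\Sigma_N\lesssim N^{2-\theta}+N^{1-\theta+\theta\kappa_\alpha}\log N\). Choosing \(\theta\kappa_\alpha<1\) yields \(\Sigma_N\lesssim N^{2-\theta'}\), hence \(|S|\lesssim N^{1-\theta'/2}\), which is the required power saving.

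If the window bookkeeping is too lossy, I would fall back on the cleaner choice of a rational approximant \(a/q\) of \(2\alpha\) with \(q\in[N^{\theta},N^{1-\theta}]\), using Dirichlet's theorem together with the finite type condition to exclude \(q<N^{\theta}\). The classical estimate \(\sum_{h<N}\min(N,\|h\gamma\|^{-1})\lesssim (N/q+1)(N+q\log q)\) then gives the same power saving.
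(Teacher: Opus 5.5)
Your proposal is correct and has the same overall structure as the paper's proof. Both reduce via Theorem~\ref{thm:intro-irr-criterion} to uniform convergence of \(\calF_\alpha\), bound dyadic blocks by Abel summation against a power-saving quadratic Weyl bound uniform in \(\beta\), and get the full-measure statement from Borel--Cantelli.

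The difference is in how the Weyl bound (the paper's Lemma~\ref{lem:Weyl}) is proved.

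\textbf{Your route.} You difference over the full range \(1\le h<N\), so you must control \(\sum_{h<N}\min(N,\|2h\alpha\|_{\T_1}^{-1})\). The pointwise finite-type bound \(\|2h\alpha\|_{\T_1}^{-1}\lesssim h^{\kappa_\alpha}\) is useless there, since it gives \(N^{1+\kappa_\alpha}\). That is why you need either the \(\tau\)-separation count in windows of length \(N^\theta\), or Dirichlet's theorem together with the classical \((N/q+1)(N+q\log q)\) lemma.

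\textbf{The paper's route.} It applies van der Corput's inequality with a short shift range \(H=\lfloor N^{1/(\kappa_\alpha+1)}\rfloor\). Each inner geometric sum is then bounded pointwise by \(O(h^{\kappa_\alpha})\le O(H^{\kappa_\alpha})\), so \(|S_N|^2\lesssim N^2/H+NH^{\kappa_\alpha}\) follows with no spacing argument at all.

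\textbf{What each buys.} The paper's argument is shorter. Yours is the classical Weyl-inequality route and gives a slightly better saving: any exponent below \(1/(2\kappa_\alpha)\) rather than \(1/(2(\kappa_\alpha+1))\), though this is immaterial for continuity. Its Dirichlet form, run with a convergent denominator of \(2\alpha\) at each dyadic scale instead of a fixed polynomial window, is also a natural starting point for the sharper continued-fraction conditions mentioned in the paper's remark. There, uniformity in \(\beta\) comes for free because \(\beta\) drops out after differencing.
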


Finally, this arithmetic restriction is not merely an artifact of the proof of the nonlinear step.  It is a genuine obstruction for the one-sided gauge profile.

\begin{proposition}[Liouville obstruction]\label{prop:intro-liouville}
There exist irrational numbers \(\alpha\) such that \(\calF_\alpha\notin C^0(\T_1)\).  More strongly, for such \(\alpha\), the Abel means
\[
  \sum_{n\ge1}\frac{\ee^{2\pi\ii\alpha n^2}}{n}r^n,
  \qquad 0<r<1,
\]
are unbounded as \(r\uparrow1\).
\end{proposition}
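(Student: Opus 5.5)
We construct \(\alpha\) as a Liouville number by a nested-interval (continued-fraction style) argument, chosen so that the Abel means at \(\beta=0\) blow up. Boundedness of \(\calF_\alpha\) would follow from continuity. The Abel means of a continuous function's Fourier series are the Poisson integrals. Since \(\calF_\alpha\) has only positive frequencies, its Abel means at \(\beta=0\) are exactly \(P_r\ast\calF_\alpha(0)\), hence bounded by \(\|\calF_\alpha\|_\infty\). So it suffices to prove the stronger statement: unboundedness of
\[
A_\alpha(r):=\sum_{n\ge1}\frac{\ee^{2\pi\ii\alpha n^2}}{n}r^n .
\]

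\textbf{Rational model.} For \(\alpha=p/q\) with \(q\) odd, the phase \(\ee^{2\pi\ii p n^2/q}\) is \(q\)-periodic in \(n\). Its mean over a period is \(q^{-1}G(p,q)\), where \(G\) is the Gauss sum with \(|G(p,q)|=\sqrt q\). Splitting into residue classes gives
\[
A_{p/q}(r)=\frac{G(p,q)}{q}\log\frac1{1-r}+O_q(1), \qquad r\uparrow1 ,
\]
and the \(O_q(1)\) term is bounded by, say, \(C q\log q\). So for fixed \(p/q\), \(|A_{p/q}(r)|\ge q^{-1/2}\log\frac1{1-r}-Cq\log q\). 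This exceeds any prescribed \(K\) once \(1-r\le \exp(-q^{1/2}(K+Cq\log q))\).

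\textbf{Perturbation.} For \(\alpha\) with \(|\alpha-p/q|\le\delta\),
\[
|A_\alpha(r)-A_{p/q}(r)|\le \sum_n \frac{\min(2,2\pi\delta n^2)}{n}r^n \le 2\pi\delta\sum_n n r^n\le \frac{2\pi\delta}{(1-r)^2}.
\]
Hence if \(\delta\le (1-r)^2\), the value at the chosen radius moves by at most \(2\pi\).

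\textbf{Iteration.} Choose \(p_1/q_1\) with \(q_1\) odd and \(r_1\) such that \(|A_{p_1/q_1}(r_1)|\ge 1+2\pi\). Inductively:
\begin{enumerate}
\item Given \(p_k/q_k\) and \(\delta_k\), pick odd \(q_{k+1}>q_k\) and \(p_{k+1}\) coprime to \(q_{k+1}\) with \(|p_{k+1}/q_{k+1}-p_k/q_k|<\delta_k/2\). This is possible because fractions with odd denominators are dense. Concretely, take \(q_{k+1}=q_k N\) with \(N\) a large odd integer, and adjust the numerator to be coprime.
\item Choose \(r_{k+1}\) so close to \(1\) that \(|A_{p_{k+1}/q_{k+1}}(r_{k+1})|\ge k+1+2\pi\).
\item Set \(\delta_{k+1}\le\min\big(\delta_k/2,(1-r_{k+1})^2\big)\), and also so small that \(\delta_{k+1}< 1/(q_{k+1}^{k+1})\).
\end{enumerate}

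The intervals \([p_k/q_k-\delta_k,\,p_k/q_k+\delta_k]\) are then nested, so their intersection contains a unique \(\alpha\). This \(\alpha\) lies within \(\delta_k\) of every \(p_k/q_k\). It satisfies \(|A_\alpha(r_k)|\ge k\) for all \(k\), so \(\sup_r|A_\alpha(r)|=\infty\).

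The bound \(|\alpha-p_k/q_k|<q_k^{-k}\) together with \(q_k\to\infty\) makes \(\alpha\) Liouville. In particular \(\alpha\) is irrational, since a rational \(a/b\) cannot be approximated to within \(q^{-k}\) by infinitely many distinct fractions. In fact \(\delta_k\) shrinks doubly exponentially, so this holds comfortably. Genericity (a dense \(G_\delta\) set of such \(\alpha\)) follows by the same construction but is not needed.

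\textbf{Main obstacle.} The delicate step is the rational asymptotic. One must check that \(G(p,q)\neq0\), which is why \(q\) is taken odd: then \(|G|=\sqrt q\). One must also keep the error term uniform enough that \(r_k\) can be chosen. Both follow from splitting \(\sum_n r^n/n\) by residues mod \(q\). The sum \(\sum_{n\equiv m}r^n/n\) equals \(q^{-1}\log\frac1{1-r}+O(1+\log q)\), with the \(O\) uniform in \(r\), which follows from comparison with \(\int\). The remaining steps are routine bookkeeping of the nested intervals.
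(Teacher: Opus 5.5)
Your proof is correct and is essentially the paper's argument. Both use an odd denominator $q$ so that the Gauss-sum mean has modulus $q^{-1/2}$, the rational asymptotic $\mu_{p,q}\log\frac{1}{1-r}+O_q(1)$ obtained from residue classes, and the perturbation bound $\delta\sum_n n r^n\lesssim\delta(1-r)^{-2}$ around a super-approximating rational. The only difference is bookkeeping: you build $\alpha$ adaptively by nested intervals, choosing $r_k$ after $p_k/q_k$ and then $\delta_k\le(1-r_k)^2$, so you only need the error term for each fixed $q$; the paper instead fixes $\alpha=\sum_j 3^{-M_j}$ with radii $r=\ee^{-1/N}$, $N=\ee^{q^2}$, which needs the error $O(q)$ uniformly in $q$ but gives an explicit growth $q^{3/2}-O(q)$.
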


Proposition \ref{prop:intro-liouville} does not by itself prove that the solution of \eqref{BO} is discontinuous at such times; the real part and the nonlinear gauge multiplier may in principle produce cancellations.  It does show, however, that the gauge-linear-profile route cannot prove continuity for every irrational time by asserting continuity of \(\calF_\alpha\) for every irrational \(\alpha\).
\begin{corollary}[Removal of the mean-zero normalization]
\label{cor:nonzero-mean}
Let \(u_0\) be real-valued with mean \(m=\widehat u_0(0)\), set
\(v_0=u_0-m\), and let \(v\) be the mean-zero solution with datum
\(v_0\).  Then
\begin{equation}\label{Galilean-intro}
  u(t,x)=m+v(t,x-2mt).
\end{equation}
Consequently all regularity conclusions above hold for nonzero-mean data
after applying the hypotheses to \(v_0\).  Every rational-time singular
point \(\xi\) is translated to \(\xi+2mt\pmod{2\pi}\), while its
coefficient is unchanged.
\end{corollary}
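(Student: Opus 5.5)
We verify the Galilean identity \eqref{Galilean-intro} directly on smooth data, extend it to the relevant data classes by continuity of the flow, and then read off the consequences.

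\textbf{Step 1: the identity for smooth data.} Fix a real constant \(m\). For a smooth mean-zero solution \(v\) of \eqref{BO}, set
\[
  w(t,x):=m+v(t,x-2mt).
\]
Differentiating gives
\[
  \partial_t w=(\partial_t v)(t,x-2mt)-2m\,(\partial_x v)(t,x-2mt).
\]
The dispersive term commutes with translations, and \(\textup H\) annihilates constants, so
\[
  \textup H[\partial_x^2 w]=(\textup H[\partial_x^2 v])(t,x-2mt).
\]
For the nonlinear term we have \(w^2=m^2+2mv+v^2\) (evaluated at \(x-2mt\)), and hence
\[
  \partial_x(w^2)=2m\,\partial_x v+\partial_x(v^2).
\]
The term \(-2m\,\partial_x v\) coming from \(\partial_x(w^2)\) exactly cancels the transport term in \(\partial_t w\). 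Therefore \(w\) solves \eqref{BO} with initial datum \(w(0)=m+v_0=u_0\).

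\textbf{Step 2: extension by density.} Uniqueness and continuous dependence in \(H^s_r\), \(s>-1/2\) \cite{GKT-wp,KLV}, together with approximation of \(v_0\) by smooth mean-zero data, extend the identity \(u=w\) to all data considered in the paper. This uses that translation and adding a constant are isometries on \(H^s\) and commute with the limit. The only delicate point is the choice of convention: the statement fixes the solution map on \(H^s_r\) as the one constructed in \cite{GKT-wp}, and that map coincides with the smooth flow on a dense set.

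\textbf{Step 3: transferring the hypotheses and conclusions.} Regularity conclusions (continuity, or membership in the kernel class) are invariant under translation by \(2mt\) and under adding the constant \(m\). Hence the theorems apply to \(u\) whenever their hypotheses hold for \(v_0\). At rational times, write the representation \eqref{intro-rational-formula} for \(v(t)\) and substitute \(x\mapsto x-2mt\). This gives
\[
  \mathcal C_\xi(x-2mt)=\mathcal C_{\xi+2mt}(x),
  \qquad
  \mathcal J_\xi(x-2mt)=\mathcal J_{\xi+2mt}(x),
\]
because \(y_\xi(x-2mt)=y_{\xi+2mt}(x)\). The coefficients are therefore unchanged, and the constant \(m\) is absorbed into \(R_t\).

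The time \(t\) itself is unchanged, so the rational/irrational and Diophantine conditions concern the same \(\alpha=t/(2\pi)\). The main potential obstacle is only the well-posedness bookkeeping in Step 2. The algebra in Step 1 is routine.
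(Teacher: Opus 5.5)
Your proof is correct and follows the paper's argument. You use the same direct check that \(m+v(t,x-2mt)\) solves \eqref{BO}, where the \(-2m\partial_y v\) transport term is matched by the cross term in \(\partial_x\bigl((m+v)^2\bigr)\), and then you translate the kernels. Your density/continuous-dependence step and the explicit identity \(y_\xi(x-2mt)=y_{\xi+2mt}(x)\) spell out the uniqueness and translation bookkeeping that the paper leaves implicit.
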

\begin{proof}
Set \(y=x-2mt\).  The Hilbert transform commutes with translations and
annihilates constants.  Therefore
\[
  \partial_tu(t,x)=\partial_tv(t,y)-2m\partial_yv(t,y),
\]
whereas
\begin{align*}
  \textup H[\partial_x^2u]-\partial_x(u^2)
  &=\textup H[\partial_y^2v]-\partial_y\bigl((m+v)^2\bigr)\\
  &=\textup H[\partial_y^2v]-\partial_y(v^2)-2m\partial_yv\\
  &=\partial_tv-2m\partial_yv.
\end{align*}
Thus \eqref{Galilean-intro} solves \eqref{BO} with initial datum \(u_0\).
The assertion about singular points and coefficients follows directly
from the spatial translation \(x\mapsto x-2mt\).
\end{proof}
\subsection{Organization of the paper}

The paper is organized as follows.  In Section~\ref{sec:preliminaries} we fix
notation, recall the form of the G\'erard--Kappeler--Topalov smoothing theorem
used throughout the paper, and prove the H\"older multiplier transfer lemma.  This
transfer lemma is the elementary mechanism which allows logarithmic singularities
of the Tao's gauge profile to be carried back to the original Benjamin--Ono
variable.

In Section~\ref{sec:rational-times} we study rational times.  We first prove a
gauge-Hardy representation for general BV initial data.  This gives the natural
general BV rational-time statement for Benjamin--Ono, but it does not by itself
imply that the profile has only finitely many jump or logarithmic singularities.
We then introduce Talbot-admissible data and prove that, for such data, every
rational time gives a finite superposition of logarithmic kernels and
one-jump kernels, modulo a continuous function.

Section~\ref{sec:finitejump} identifies a concrete class of Talbot-admissible
data.  We prove that finite-jump, piecewise sufficiently smooth BV data are
Talbot-admissible by computing the high-frequency tail of the initial Tao's gauge
profile.  The square wave \(s_0(x)=\operatorname{sgn}(x)\) is treated explicitly
as the main model example.

Section~\ref{sec:irrational} is devoted to irrational times.  We first formulate
an irrational-time transfer criterion in terms of the continuity of the relevant
one-sided quadratic logarithmic series.  We then prove, by a self-contained Weyl's
sum argument, that this criterion holds when \(t/(2\pi)\) is irrational of finite
Diophantine type. We also show that the one-sided quadratic
logarithmic series can fail to be continuous for certain Liouville irrational
parameters.  This gives an arithmetic obstruction to removing all assumptions on
irrational times within the present gauge-profile method.

Finally, Section~\ref{sec:compatibility} compares the rigorous
singular-kernel representation proved in this paper with the numerical profiles of
Alama Bronsard--Laurens.  In particular, we compare the rational-time
jump/logarithmic-kernel representation with qualitative features of the
numerical Talbot profiles and explain why the two are compatible, while the
nonlinear coefficients need not agree with those of the linear
Benjamin--Ono Talbot profile.

\section{Preliminaries}\label{sec:preliminaries}
\subsection{Notations}
We use the Fourier convention
\[
  \widehat f(n)=\frac1{2\pi}\int_{-\pi}^{\pi}f(x)\ee^{-\ii nx}\,\dd x,
  \qquad
  f(x)=\sum_{n\in\Z}\widehat f(n)\ee^{\ii nx}.
\]
The inner product is
\[
  \langle f\mid g\rangle=\frac1{2\pi}\int_{-\pi}^{\pi}f(x)\overline{g(x)}\,\dd x.
\]
We denote by \(\Piop\) the Riesz--Szego projection onto nonnegative Fourier modes, and by \(\Pip\) the projection onto strictly positive Fourier modes.

For \(\xi\in\T\), define
\begin{equation}\label{def-Lxi-main}
  L_\xi(x)=-\log(1-\ee^{\ii(x-\xi)}).
\end{equation}
As a distribution,
\begin{equation}\label{series-Lxi-main}
  L_\xi(x)=\sum_{n\ge1}\frac{\ee^{\ii n(x-\xi)}}{n}.
\end{equation}
With the principal boundary value, if \(y_\xi(x)\in(0,2\pi)\) and \(y_\xi(x)\equiv x-\xi\pmod{2\pi}\), then
\begin{equation}\label{L-decomp-main}
  L_\xi(x)=\mathcal C_\xi(x)+\ii\mathcal J_\xi(x),
\end{equation}
where
\begin{equation}\label{CJ-main}
  \mathcal C_\xi(x)=-\log\left|2\sin\frac{x-\xi}{2}\right|,
  \qquad
  \mathcal J_\xi(x)=\frac{\pi-y_\xi(x)}2.
\end{equation}
\subsection{Automatic regularity}
In fact, for every Talbot-admissible datum $u_0$ given in \ref{def:intro-admissible}, we can show that \(u_0\in H^{1/2-}_{r,0}(\T):=
\bigcap_{0\le s<1/2}H^s_{r,0}(\T)\).
\begin{proof}[Proof of Proposition \ref{prop:auto-regularity}]
Let \(\Phi\) denote the Birkhoff map for the Benjamin--Ono equation and
\(\Phi_0\) its high-frequency gauge approximation.  We apply the
high-frequency approximation at the base regularity \(s=0\).  By
\cite[Theorem~1.6 and Remark~1.7(ii)]{GKT-smoothing},
\begin{equation}\label{Phi0-gauge-formula}
  \Phi_0(u_0)=\left(-\frac{\ii}{\sqrt n}
  \widehat{\calG(u_0)}(n)\right)_{n\ge1},
  \qquad
  \Phi(u_0)-\Phi_0(u_0)\in\frakh^1.
\end{equation}
Using \eqref{admissible-intro},
\[
  \Phi_0(u_0)_n=-\ii\sum_{j=1}^J
  \frac{c_j\ee^{-\ii n a_j}}{n^{3/2}}
  -\ii\frac{\rho_n}{\sqrt n}.
\]
The finite-edge term belongs to \(\frakh^r\) for every \(r<1\), while
\((\rho_n/\sqrt n)_{n\ge1}\in\frakh^{1+\eps}\).  Hence
\(\Phi_0(u_0)\in\frakh^r\), and therefore
\(\Phi(u_0)\in\frakh^r\), for every \(r<1\).

Fix \(0\le s<1/2\) and take \(r=s+1/2\).  The scale property of the
Birkhoff map \cite[Proposition~A.1]{GKT-wp} gives
\[
  \Phi^{-1}\bigl(\frakh^{s+1/2}\bigr)=H^s_{r,0}(\T).
\]
It follows that \(u_0\in H^s_{r,0}(\T)\).  Since \(s<1/2\) was
arbitrary, the proof is complete.
\end{proof}
\subsection{Smoothing theorem}
We recall the following smoothing theorem of
Gérard--Kappeler--Topalov \cite[Theorem 1.1]{GKT-smoothing}.

\begin{proposition}(\cite[Theorem 1.1]{GKT-smoothing})\label{prop:GKT}
Let \(0\le s<1/2\), and let \(u_0\in H^s_{r,0}(\T)\).  Let \(u(t)=\calS(t,u_0)\) be the solution of \eqref{BO}, set \(w_0=\calG(u_0)\), and define
\begin{equation}\label{def-wL-main}
  w_L(t,x)=\sum_{n\ge1}\ee^{\ii t(n^2-\langle u_0^2\mid1\rangle)}\widehat w_0(n)\ee^{\ii nx}.
\end{equation}
Then
\begin{equation}\label{GKT-main}
  u(t)=2\Ree\left(\ee^{\ii\partial_x^{-1}u(t)}\,\ii\,w_L(t)\right)+r(t),
  \qquad
  r(t)\in H^{3s}(\T).
\end{equation}
\end{proposition}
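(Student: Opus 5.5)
The statement is a direct specialization of \cite[Theorem~1.1]{GKT-smoothing}, so the plan is a careful translation of conventions rather than a new argument. I would write the Benjamin--Ono equation, Tao's gauge \(\calG\), the operator \(\partial_x^{-1}\), the Hardy projection \(\Piop\) and the Fourier normalization of \cite{GKT-smoothing} side by side with those of \eqref{BO}, \eqref{gauge-intro} and \S2.1. I would then check that they agree, or differ only by a harmless rescaling or reflection \(x\mapsto -x\) that is undone at the end. In particular I would check:
\begin{itemize}
\item the sign of \(\textup H\) and of the dispersion, which gives the phase \(\ee^{\ii tn^2}\) rather than \(\ee^{-\ii tn^2}\);
\item the coefficient in front of \(\partial_x(u^2)\);
\item the normalization of \(\langle\cdot\mid\cdot\rangle\), which fixes the constant \(M=\langle u_0^2\mid1\rangle\) in the phase.
\end{itemize}

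To make sure the formula \eqref{GKT-main} is stated with the right constants, I would recall the mechanism behind it.

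\emph{Inversion identity.} Put \(V=\partial_x^{-1}u\) and \(g=\ee^{-\ii V}\). Then \(g'=-\ii ug\), hence \(u=\ii\,\overline g\,g'\). Splitting \(g=\Piop g+(1-\Piop)g\) and using that \(u\) is real gives
\[
u=2\Ree\bigl(\ee^{\ii V}\,\ii\,\partial_x\Piop g\bigr)+(\text{terms quadratic in }(1-\Piop)g\text{, smoother}).
\]
This is exactly the static form of \eqref{GKT-main} with \(w=\calG(u)\).

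\emph{Gauge equation.} Along the flow, \(w(t)=\calG(u(t))\) satisfies
\[
\partial_tw=\ii\partial_x^2w-\ii\langle u^2\mid1\rangle w+N(u,w),
\]
where the cubic-type nonlinearity \(N\) has no high--high to high resonant interactions. Conservation of the \(L^2\) norm makes the constant phase equal to \(M\) for all times.

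\emph{Duhamel.} Writing \(w(t)\) as \(w_L(t)\) plus a Duhamel integral, the normal-form/smoothing estimate of \cite{GKT-smoothing} puts the integral term in \(H^{3s}\) when \(u_0\in H^s_{r,0}\) and \(0\le s<1/2\). Inserting this into the inversion identity, and bounding the product of the H\"older factor \(\ee^{\ii V(t)}\) with a smoother function, gives \(r(t)\in H^{3s}\).

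The main obstacle is purely bookkeeping. One has to confirm that the remainder regularity \(H^{3s}\) and the range \(0\le s<1/2\) are precisely those of \cite[Theorem~1.1]{GKT-smoothing} in our normalization. One also has to confirm that the zero mode of \(w\) does not enter: the sum in \eqref{def-wL-main} starts at \(n\ge1\) because \(\calG(u)=\partial_x\Piop(\cdot)\) kills the mode \(n=0\). I would settle both points by checking the low-mode and phase conventions of the cited theorem directly, and otherwise invoke it verbatim.
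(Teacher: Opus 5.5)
Your plan matches the paper, which gives no argument of its own here and simply quotes \cite[Theorem~1.1]{GKT-smoothing} in the present normalization, so invoking that theorem after a convention check is exactly what is done. Since you intend to use your heuristic sketch to confirm the constants, two slips in it should be fixed. First, because $\widehat{\partial_x^2 w}(n)=-n^2\widehat w(n)$, the gauge equation must read $\partial_t w=-\ii\partial_x^2 w-\ii\langle u^2\mid1\rangle w+2\partial_x\Piop\bigl(g\,\Pi_{<0}\partial_x u\bigr)$, where $\Pi_{<0}$ is the projection onto negative modes; this produces the phase $\ee^{\ii t(n^2-\langle u_0^2\mid1\rangle)}$, whereas your $+\ii\partial_x^2 w$ would give $\ee^{-\ii tn^2}$. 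Second, the remainder in the static inversion identity is exactly the commutator term $\Imm\bigl(\overline{g}\,[\textup H,g]u\bigr)$, not a term quadratic in $(1-\Piop)g$.
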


Whenever \(u_0\in H^\sigma_{r,0}(\T)\) with every \(\sigma<1/2\), we choose
\begin{equation}\label{sstar-choice}
  s_\ast\in\left(\frac16,\frac12\right).
\end{equation}
Applying Proposition \ref{prop:GKT} with \(s=s_\ast\), we have \(r(t)\in H^{3s_\ast}\hookrightarrow C^0(\T)\).  Moreover
\[
  \partial_x^{-1}u(t)\in H^{1+s_\ast}(\T)\hookrightarrow C^\rho(\T)
\]
for some \(\rho>0\).  Thus
\begin{equation}\label{a-holder-main}
  a_t(x):=\ee^{\ii\partial_x^{-1}u(t,x)}\in C^\rho(\T).
\end{equation}

\subsection{Hölder's transfer of logarithmic singularities}\label{sec:transfer}
The next result is the elementary transfer principle that makes it possible to pass from Tao's gauge variable back to the original variable.

\begin{proposition}[Hölder's multiplier transfer]\label{thm:holder-transfer}
Let \(0<\rho\le1\), let \(a\in C^\rho(\T)\), and suppose
\begin{equation}\label{v-log-form}
  v(x)=\sum_{\nu=1}^N\Gamma_\nu L_{\xi_\nu}(x)+C(x),
  \qquad C\in C^0(\T),
\end{equation}
 in distributions.  We use the locally integrable representative supplied
by the right-hand side of \eqref{v-log-form}, so the product \(av\) below
is well defined.  Then
\begin{equation}\label{holder-transfer-prod}
  a(x)v(x)=\sum_{\nu=1}^N a(\xi_\nu)\Gamma_\nu L_{\xi_\nu}(x)+C_a(x),
  \qquad C_a\in C^0(\T),
\end{equation}
 in distributions and pointwise away from the finite candidate set.  Consequently, if
\[
  U(x)=2\Ree(a(x)\,\ii\,v(x))+r(x),
  \qquad r\in C^0(\T),
\]
then
\begin{equation}\label{U-transfer}
  U(x)=2\Ree\sum_{\nu=1}^N \ii a(\xi_\nu)\Gamma_\nu L_{\xi_\nu}(x)+R(x),
  \qquad R\in C^0(\T).
\end{equation}
Equivalently,
\[
  U(x)=\sum_{\nu=1}^N\left(A_\nu\mathcal C_{\xi_\nu}(x)+B_\nu\mathcal J_{\xi_\nu}(x)\right)+R(x),
\]
with
\[
  A_\nu=2\Ree(\ii a(\xi_\nu)\Gamma_\nu),
  \qquad
  B_\nu=-2\Imm(\ii a(\xi_\nu)\Gamma_\nu).
\]
\end{proposition}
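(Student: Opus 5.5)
The argument is local, one singular point at a time. By linearity it suffices to treat a single term, that is, to show that
\[
  (a(x)-a(\xi))\,L_\xi(x)\in C^0(\T)
\]
for each fixed \(\xi\in\T\), after defining this function to vanish at \(x=\xi\). Together with \(aC\in C^0(\T)\) (a product of continuous functions), this gives \eqref{holder-transfer-prod} with
\[
  C_a=aC+\sum_\nu\Gamma_\nu\,(a-a(\xi_\nu))L_{\xi_\nu}.
\]
Away from \(\xi\), the function \(L_\xi\) is smooth, since \(1-\ee^{\ii(x-\xi)}\neq0\) and the principal branch is well defined with a continuous argument on \(\T\setminus\{\xi\}\). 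The product is therefore continuous on \(\T\setminus\{\xi\}\).

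At \(x=\xi\) we combine two estimates. By \eqref{L-decomp-main},
\[
  |L_\xi(x)|\le \bigl|\log|2\sin\tfrac{x-\xi}{2}|\bigr|+\tfrac{\pi}{2},
\]
so \(|L_\xi(x)|\lesssim 1+|\log|x-\xi||\) near \(\xi\), using the distance on \(\T\). The Hölder condition gives \(|a(x)-a(\xi)|\le [a]_\rho|x-\xi|^\rho\). Hence
\[
  |(a(x)-a(\xi))L_\xi(x)|\lesssim |x-\xi|^\rho\bigl(1+|\log|x-\xi||\bigr)\to0 .
\]
This shows continuity at \(\xi\). The jump of \(\mathcal J_\xi\) at \(\xi\) is harmless because it is bounded, and it is multiplied by a factor that vanishes at \(\xi\).

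Next we check that the identity holds in the distributional sense, not only pointwise. All the functions involved are in \(L^1(\T)\), because \(L_\xi\) has only a logarithmic singularity and \(a\) is bounded. The product \(av\) is defined via the stated representative, so the pointwise identity off the finite set \(\{\xi_\nu\}\) is an identity of \(L^1\) functions, and hence of distributions. Pointwise validity away from the candidate set is immediate from the construction.

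For \eqref{U-transfer}, we multiply \eqref{holder-transfer-prod} by \(\ii\), take \(2\Ree\), and set
\[
  R=2\Ree(\ii C_a)+r\in C^0(\T).
\]
Finally, we write \(\gamma_\nu=\ii a(\xi_\nu)\Gamma_\nu\) and insert \(L_\xi=\mathcal C_\xi+\ii\mathcal J_\xi\). Since \(\mathcal C_\xi\) and \(\mathcal J_\xi\) are real,
\[
  2\Ree\bigl(\gamma(\mathcal C+\ii\mathcal J)\bigr)=2\Ree(\gamma)\,\mathcal C-2\Imm(\gamma)\,\mathcal J ,
\]
which yields the stated values \(A_\nu=2\Ree(\ii a(\xi_\nu)\Gamma_\nu)\) and \(B_\nu=-2\Imm(\ii a(\xi_\nu)\Gamma_\nu)\).

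The only genuine point in the argument is the logarithm-versus-Hölder estimate at \(\xi\). The one thing that needs care is that the remainder \(C_a\) is defined consistently at the finitely many points \(\xi_\nu\), which we handle by setting each freezing term \((a-a(\xi_\nu))L_{\xi_\nu}\) equal to zero at \(\xi_\nu\).
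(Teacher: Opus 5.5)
Your proposal is correct and follows the paper's proof. Both reduce everything to the continuity of $(a(x)-a(\xi))L_\xi(x)$ across $x=\xi$, which comes from comparing $|a(x)-a(\xi)|\lesssim|x-\xi|^\rho$ with $|L_\xi(x)|\lesssim 1+|\log|x-\xi||$, and both then read off $A_\nu,B_\nu$ from $L_\xi=\mathcal C_\xi+\ii\mathcal J_\xi$; you additionally write out the bookkeeping the paper leaves implicit (the explicit $C_a$, the $L^1$-to-distribution identification, and the real-part computation).
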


\begin{proof}
It suffices to show that \((a(x)-a(\xi))L_\xi(x)\) extends continuously across \(x=\xi\).
Away from \(\xi\) there is nothing to prove.  Near \(\xi\),
\[
  |L_\xi(x)|\lesssim1+|\log|x-\xi||,
\]
while \(a\in C^\rho\) gives
\[
  |a(x)-a(\xi)|\lesssim |x-\xi|^\rho.
\]
Thus
\[
  |a(x)-a(\xi)|\,|L_\xi(x)|
  \lesssim |x-\xi|^\rho(1+|\log|x-\xi||)\to0 \text{ as } x \to \xi.
\]
This proves \eqref{holder-transfer-prod}.  The rest follows from \(L_\xi=\mathcal C_\xi+\ii\mathcal J_\xi\).
\end{proof}
\section{Talbot effect at rational times}\label{sec:rational-times}
\subsection{General BV data at rational times}\label{sec:BV}

We first apply Proposition \ref{prop:GKT} to prove Theorem \ref{thm:intro-BV}.
\begin{proof}[Proof of Theorem \ref{thm:intro-BV}]

Let \(u_0\in\BV(\T)\) be real-valued satisfying $\widehat{u}_0 (0) = 0$.  Since the distributional derivative \(Du_0\) is a finite measure,
\[
  |n\widehat u_0(n)|\le C\|Du_0\|_{\mathcal M},\qquad n\ne0.
\]
Here, $\mathcal{M}$ denotes the space of finite Radon measures on the torus $\mathbb{T}$, equipped with the total variation norm. Consequently \(u_0\in H^s(\T)\) for every \(s<1/2\), and in particular the representation \eqref{GKT-main} may be applied with an exponent \(s_\ast>1/6\).

Set
\[
  V_0=\partial_x^{-1}u_0,
  \qquad
  g_0=\ee^{-\ii V_0}.
\]
Since \(u_0\in L^\infty\), the primitive \(V_0\) is Lipschitz, hence \(g_0\) is Lipschitz.  We have, a.e. and distributionally,
\begin{equation}\label{g0-derivative}
  g_0'=-\ii u_0\ee^{-\ii V_0}=:h_0.
\end{equation}
The product of BV functions is BV, so \(h_0\in\BV(\T)\subset L^2(\T)\).  For \(n\ge1\),
\begin{equation}\label{gauge-h0}
  \widehat{\calG(u_0)}(n)=\ii n\widehat g_0(n)=\widehat{g_0'}(n)=\widehat h_0(n).
\end{equation}
Thus \(\calG(u_0)=\Pip h_0\).

Let \(M=\langle u_0^2\mid1\rangle\).  The gauge profile is
\[
  w_L(t,x)=\ee^{-\ii tM}\sum_{n\ge1}\ee^{\ii tn^2}\widehat h_0(n)\ee^{\ii nx}.
\]
Assume \(t/(2\pi)=p/q\), \((p,q)=1\).  The function \(n\mapsto\ee^{\ii tn^2}\) is \(q\)-periodic, so it has the finite Fourier expansion
\[
  \ee^{\ii tn^2}=\sum_{\ell=0}^{q-1}d_\ell(t)\ee^{-2\pi\ii\ell n/q}.
\]
Therefore
\begin{equation}\label{wL-BV-rational}
  w_L(t,x)=\ee^{-\ii tM}\sum_{\ell=0}^{q-1}d_\ell(t)(\Pip h_0)\left(x-\frac{2\pi\ell}{q}\right)
\end{equation}
 in \(L^2\) and in distributions.  Substitution into \eqref{GKT-main}, together with \(r(t)\in C^0\) and \(a_t\in C^\rho\), proves \eqref{intro-BV-rep}.
\end{proof}

\subsection{Talbot effect for Talbot-admissible data at rational times}\label{sec:rational}
Combining Proposition \ref{prop:GKT} and Proposition
\ref{thm:holder-transfer} proves Theorem \ref{thm:intro-rational}.
\begin{proof}[Proof of Theorem \ref{thm:intro-rational}]

Let \(u_0\) be Talbot-admissible and write
\[
  M=\langle u_0^2\mid1\rangle.
\]
By \eqref{def-wL-main} and \eqref{admissible-intro},
\begin{equation}\label{wL-admissible-split}
  w_L(t,x)=\ee^{-\ii tM}\sum_{j=1}^J c_j
  \sum_{n\ge1}\frac{\ee^{\ii tn^2}\ee^{\ii n(x-a_j)}}{n}+C_t(x),
\end{equation}
where \(C_t\in C^0(\T)\).  Indeed, weighted Cauchy--Schwarz gives
\[
  \sum_{n\ge1}|\rho_n|
  \le
  \left(\sum_{n\ge1}n^{1+2\eps}|\rho_n|^2\right)^{1/2}
  \left(\sum_{n\ge1}n^{-1-2\eps}\right)^{1/2}
  <\infty.
\]
Thus the remainder Fourier series converges absolutely and uniformly.

Assume \(t/(2\pi)=p/q\), \((p,q)=1\).  Since \(n\mapsto\ee^{\ii tn^2}\) is \(q\)-periodic, write
\[
  \ee^{\ii tn^2}=\sum_{\ell=0}^{q-1}d_\ell(t)\ee^{-2\pi\ii\ell n/q}.
\]
Substitution into \eqref{wL-admissible-split} yields
\begin{align}\label{wL-rational-log}
  w_L(t,x)
  &=\ee^{-\ii tM}\sum_{j=1}^J\sum_{\ell=0}^{q-1}c_jd_\ell(t)
    \sum_{n\ge1}\frac{\ee^{\ii n(x-a_j-2\pi\ell/q)}}{n}+C_t(x)\notag\\
  &=\ee^{-\ii tM}\sum_{j=1}^J\sum_{\ell=0}^{q-1}c_jd_\ell(t)
    L_{a_j+2\pi\ell/q}(x)+C_t(x).
\end{align}
The identity is distributional and pointwise away from the finite
candidate set.  Combining \eqref{wL-rational-log} with \eqref{GKT-main}
and applying Proposition \ref{thm:holder-transfer} gives
\eqref{intro-rational-formula}.
\end{proof}
\section{Finite-jump data and the square wave}\label{sec:finitejump}

In this section, we aim to prove Proposition \ref{thm:intro-finite-jump}.  We first recall a standard result.

\begin{lemma}[Fourier tail of a piecewise smooth function]\label{lem:piecewise-tail}
Let \(h\) be piecewise \(C^{1,\eta}\) on \(\T\), with finitely many jump points \(a_j\), where \(0<\eta\le1\).  Then, as \(n\to+\infty\),
\begin{equation}\label{piecewise-tail}
  \widehat h(n)=\frac{1}{2\pi\ii n}\sum_j [h]_{a_j}\ee^{-\ii n a_j}+O(n^{-1-\eta}),
\end{equation}
where \([h]_{a_j}=h(a_j+)-h(a_j-)\).
\end{lemma}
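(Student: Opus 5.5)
We will integrate by parts on each arc between consecutive jump points and then estimate the resulting integral of \(h'\) by a standard H\"older-difference (half-period shift) argument. Order the jump points \(a_1<\dots<a_J\) cyclically on \(\T\), with \(a_{J+1}=a_1+2\pi\), and on each open arc \(I_j=(a_j,a_{j+1})\) let \(h_j\in C^{1,\eta}(\overline{I_j})\) be the restriction of \(h\), extended continuously to the endpoints together with its derivative. Then
\[
  2\pi\widehat h(n)=\sum_j\int_{a_j}^{a_{j+1}}h_j(x)\ee^{-\ii nx}\dd x .
\]
On each arc we integrate by parts once:
\[
  \int_{a_j}^{a_{j+1}}h_j\ee^{-\ii nx}\dd x
  =\Bigl[\frac{h_j(x)\ee^{-\ii nx}}{-\ii n}\Bigr]_{a_j}^{a_{j+1}}
  +\frac1{\ii n}\int_{a_j}^{a_{j+1}}h_j'(x)\ee^{-\ii nx}\dd x .
\]
Summing the boundary terms over \(j\) and regrouping them at each jump point \(a_j\) gives
\[
  \frac{1}{\ii n}\bigl(h(a_j+)-h(a_j-)\bigr)\ee^{-\ii na_j}=\frac{[h]_{a_j}}{\ii n}\ee^{-\ii n a_j}.
\]
The \(2\pi\)-periodicity makes the boundary contribution at \(a_{J+1}\) coincide with the one at \(a_1\). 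Dividing by \(2\pi\) yields the main term in \eqref{piecewise-tail}.

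It remains to show that \(\int_{a_j}^{a_{j+1}}h_j'(x)\ee^{-\ii nx}\dd x=O(n^{-\eta})\) for each \(j\); after the factor \(1/(\ii n)\) this gives the \(O(n^{-1-\eta})\) error. This is the only non-routine step, because \(h_j'\) is only \(C^\eta\) on a closed interval and not periodic, so the usual shift trick produces boundary pieces. Set \(f=h_j'\) on \(I=[\alpha,\beta]\) and \(\delta=\pi/n\). Since \(\ee^{-\ii n(x+\delta)}=-\ee^{-\ii nx}\), for \(n\) large (so that \(\delta<\beta-\alpha\)) we can write
\[
  2\int_\alpha^\beta f\ee^{-\ii nx}\dd x
  =\int_\alpha^{\beta-\delta}\bigl(f(x)-f(x+\delta)\bigr)\ee^{-\ii nx}\dd x
  +\int_{\beta-\delta}^{\beta}f\ee^{-\ii nx}\dd x+\int_\alpha^{\alpha+\delta}f\ee^{-\ii nx}\dd x .
\]
The first integral is bounded by \((\beta-\alpha)[f]_{C^\eta}\delta^\eta\lesssim n^{-\eta}\). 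Each of the two boundary integrals is bounded by \(\|f\|_\infty\delta\lesssim n^{-1}\le n^{-\eta}\), since \(\eta\le1\).

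Summing over the finitely many arcs completes the argument. The implied constant depends on \(\max_j\|h_j\|_{C^{1,\eta}}\) and on the minimal arc length. If \(h\) has no jumps, the same computation applies with the boundary terms cancelling.
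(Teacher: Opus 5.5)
Your proof is correct and is essentially the paper's argument: one integration by parts, then the half-period shift $\delta=\pi/n$, where H\"older continuity of the derivative gives $O(\delta^\eta)$ and the strips of length $O(\delta)$ next to the break points contribute only $O(\delta)$. The only difference is bookkeeping. The paper subtracts sawtooth functions $S_{a_j}$, whose coefficients $e^{-ina_j}/(2\pi i n)$ produce the main term, and then runs the shift globally on $\T$ for $v=(h-S)'$. You instead read off the jump terms as boundary terms of an arc-by-arc integration by parts and run the shift separately on each arc.
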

\begin{proof}
For \(a\in\T\), let \(S_a\) be the \(2\pi\)-periodic sawtooth
function whose representative on \((a,a+2\pi)\) is
\[
  S_a(x)=\frac12-\frac{x-a}{2\pi}.
\]
The values of \(S_a\) at \(a+2\pi\Z\) are immaterial.  Since
\[
  S_a(a+)=\frac12,
  \qquad
  S_a(a-)=-\frac12,
\]
we have \([S_a]_a=1\).  Moreover, for \(n\ne0\), a direct
calculation gives
\[
\begin{aligned}
  \widehat{S_a}(n)
  &=
  \frac{\ee^{-\ii n a}}{2\pi}
  \int_0^{2\pi}
  \left(\frac12-\frac{y}{2\pi}\right)
  \ee^{-\ii n y}\,\dd y  \\
  &=
  \frac{\ee^{-\ii n a}}{2\pi\ii n}.
\end{aligned}
\]

Set
\[
  S=\sum_j [h]_{a_j}S_{a_j},
  \qquad
  k=h-S.
\]
Then \(S\) has exactly the same jumps as \(h\).  Consequently,
after redefining \(k\) at finitely many points if necessary, \(k\)
is continuous on \(\T\) and piecewise \(C^{1,\eta}\).  Furthermore,
for \(n\ne0\),
\[
  \widehat S(n)
  =
  \frac{1}{2\pi\ii n}
  \sum_j [h]_{a_j}\ee^{-\ii n a_j}.
\]

Let \(v=k'\), defined on the smooth pieces.  Then \(v\) is bounded
and piecewise \(C^{0,\eta}\).  For \(\delta>0\), translation gives
the exact identity
\[
  \widehat{v(\,\cdot+\delta)-v}(n)
  =
  \bigl(\ee^{\ii n\delta}-1\bigr)\widehat v(n).
\]
Hence
\[
  |\ee^{\ii n\delta}-1|\,|\widehat v(n)|
  \le
  \frac1{2\pi}
  \|v(\,\cdot+\delta)-v\|_{L^1}.
\]
For sufficiently small \(\delta\), outside intervals of total
length \(O(\delta)\) surrounding the finitely many interfaces,
the two points \(x\) and \(x+\delta\) lie in the same smooth piece.
The \(\eta\)-H\"older continuity of \(v\) on each such piece
therefore gives
\[
  \|v(\,\cdot+\delta)-v\|_{L^1}
  \lesssim \delta^\eta+\delta
  \lesssim \delta^\eta,
\]
where the last inequality uses \(0<\eta\le1\) and \(0<\delta\le1\).

Now take \(\delta=\pi/n\).  Since \(n\to+\infty\),
\[
  |\ee^{\ii n\delta}-1|
  =
  |\ee^{\ii\pi}-1|
  =2,
\]
and therefore
\[
  \widehat v(n)=O(n^{-\eta}).
\]
Since \(k\) is continuous and periodic, all boundary terms cancel
when integrating by parts on its smooth pieces.  Thus
\[
  \widehat v(n)=\ii n\widehat k(n),
\]
and hence
\[
  \widehat k(n)
  =
  \frac{\widehat v(n)}{\ii n}
  =
  O(n^{-1-\eta}).
\]
Finally,
\[
\begin{aligned}
  \widehat h(n)
  &=
  \widehat S(n)+\widehat k(n) \\
  &=
  \frac{1}{2\pi\ii n}
  \sum_j [h]_{a_j}\ee^{-\ii n a_j}
  +O(n^{-1-\eta}),
\end{aligned}
\]
which proves \eqref{piecewise-tail}.
\end{proof}

We can now prove Proposition \ref{thm:intro-finite-jump}.
\begin{proof}[Proof of Proposition \ref{thm:intro-finite-jump}]
Let \(u_0\) be piecewise \(C^{1,\eta}\), real-valued, mean-zero, with jump points \(a_1,\dots,a_J\).  Then \(u_0\in H^\sigma(\T)\) for every \(\sigma<1/2\).  Let
\[
  V_0=\partial_x^{-1}u_0,
  \qquad
  g_0=\ee^{-\ii V_0},
  \qquad
  h_0=g_0'=-\ii u_0g_0.
\]
The primitive \(V_0\) and the function \(g_0\) are continuous.  Hence
\begin{equation}\label{jump-h0}
  [h_0]_{a_j}=-\ii [u_0]_{a_j}g_0(a_j)
  =-\ii [u_0]_{a_j}\ee^{-\ii V_0(a_j)}.
\end{equation}
By \eqref{gauge-h0}, \(\widehat{\calG(u_0)}(n)=\widehat h_0(n)\) for \(n\ge1\).  Lemma \ref{lem:piecewise-tail} and \eqref{jump-h0} give
\[
  \widehat{\calG(u_0)}(n)=
  -\frac1{2\pi n}\sum_{j=1}^J [u_0]_{a_j}\ee^{-\ii V_0(a_j)}\ee^{-\ii n a_j}
  +O(n^{-1-\eta}).
\]
Since \(O(n^{-1-\eta})\in\frakh^{1/2+\eps}\) for every \(0<\eps<\eta\), the datum is Talbot-admissible.
\end{proof}
\begin{corollary}[Square wave]\label{cor:square}
Let \(s_0(x)=\sgn(x)\) for \(x\in(-\pi,\pi)\), extended periodically.  Then
\[
  \widehat{\calG(s_0)}(n)=
  -\frac{\ii(1+(-1)^n)}{\pi n}+O(n^{-3}).
\]
In particular, the solution of \eqref{BO} with square-wave initial datum
satisfies the rational-time finite jump/logarithmic-kernel representation
of Theorem \ref{thm:intro-rational}.
\end{corollary}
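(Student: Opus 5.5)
The plan is to apply Proposition~\ref{thm:intro-finite-jump} to the explicit square wave, and then improve its remainder from \(O(n^{-1-\eta})\) to \(O(n^{-3})\) by integrating by parts two further times. The improvement is possible because \(h_0\) has a very special structure.

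\textbf{Step 1: the gauge data.} On \((-\pi,\pi)\) the datum \(s_0\) has jumps at \(a_1=0\) and \(a_2=\pi\), with
\[
[s_0]_0=2,\qquad [s_0]_\pi=-2 .
\]
Since \(s_0\) has mean zero, its primitive is \(V_0(x)=|x|-\pi/2\); the constant is fixed because \(|x|\) has mean \(\pi/2\). Hence
\[
\ee^{-\ii V_0(0)}=\ee^{\ii\pi/2}=\ii,\qquad \ee^{-\ii V_0(\pi)}=\ee^{-\ii\pi/2}=-\ii .
\]
Inserting these values into \eqref{intro-finite-tail} gives the leading term
\[
-\frac{1}{2\pi n}\bigl(2\ii+2\ii(-1)^n\bigr)=-\frac{\ii(1+(-1)^n)}{\pi n}.
\]
This matches the claimed leading term.

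\textbf{Step 2: improving the remainder.} I will follow the proof of Lemma~\ref{lem:piecewise-tail} with \(h=h_0=-\ii s_0\ee^{-\ii V_0}\). By \eqref{jump-h0}, the jumps are \([h_0]_0=2\) and \([h_0]_\pi=-2\). Set
\[
S=2S_0-2S_\pi,\qquad k=h_0-S .
\]
Then \(\widehat S(n)\) reproduces the leading term exactly. The jumps of \(h_0\) sum to zero, so \(S'=0\) away from the jump points; thus \(S\) is piecewise constant and \(k'=h_0'\) on each piece.

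On each piece,
\[
h_0'=-\ii s_0\cdot(-\ii V_0')\ee^{-\ii V_0}=-s_0^2\ee^{-\ii V_0}=-\ee^{-\ii V_0}.
\]
This function is continuous on \(\T\), because \(V_0\) is continuous. Differentiating once more gives \(k''=\ii s_0\ee^{-\ii V_0}\). This is bounded, piecewise smooth, and has only jump discontinuities.

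Now integrate by parts. Because \(k\) and \(k'\) are both continuous and periodic, the boundary terms vanish twice, so
\[
\widehat k(n)=\frac{\widehat{k''}(n)}{(\ii n)^2}.
\]
Apply Lemma~\ref{lem:piecewise-tail} to \(k''\), which is piecewise \(C^{1,1}\). This gives \(\widehat{k''}(n)=O(n^{-1})\), and therefore \(\widehat k(n)=O(n^{-3})\).

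\textbf{Step 3: conclusion.} Using \eqref{gauge-h0}, \(\widehat{\calG(s_0)}(n)=\widehat h_0(n)=\widehat S(n)+\widehat k(n)\) for \(n\ge1\), which gives the claimed expansion. The remainder lies in \(\frakh^{1/2+\eps}\), so \(s_0\) is Talbot-admissible, and Theorem~\ref{thm:intro-rational} applies.

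The only delicate point is the continuity of \(k'\). It depends on the cancellation \(s_0^2=1\), which makes \(h_0'\) continuous, together with the vanishing of the sum of the jumps of \(h_0\), which makes \(S'\) vanish. Both facts need to be checked carefully, since without them the remainder would only be \(O(n^{-2})\).
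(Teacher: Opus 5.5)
\textbf{The approach is different from the paper's and can be repaired, but Step~2 as written fails because of a sign error.} Since $h_0=-\ii s_0\cdot\ii\ee^{-\ii|x|}=\sgn(x)\ee^{-\ii|x|}$, one has $h_0(\pi^-)=-1$ and $h_0(\pi^+)=h_0(-\pi^+)=+1$. Hence $[h_0]_\pi=+2$, not $-2$; equivalently, \eqref{jump-h0} gives $-\ii\cdot(-2)\cdot(-\ii)=2$.

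With your choice $S=2S_0-2S_\pi$ two things go wrong:
\begin{enumerate}
\item $\widehat S(n)=-\ii(1-(-1)^n)/(\pi n)$, which does not reproduce the leading term you correctly found in Step~1.
\item $k=h_0-S$ keeps a jump of size $4$ at $\pi$, so the first integration by parts already fails.
\end{enumerate}
Your claim that the jumps of $h_0$ sum to zero is also false: they sum to $4=-\int_{-\pi}^{\pi}h_0'\,\dd x$. Your closing diagnosis, that this vanishing is needed for the $O(n^{-3})$ bound, is likewise wrong.

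The repair is easy. Take $S=2S_0+2S_\pi$. Each sawtooth $S_a$ has slope $-1/(2\pi)$ everywhere off $a$, so $S'\equiv-2/\pi$ is a single global constant on the pieces, whatever the jumps are. Then:
\begin{itemize}
\item $k'=h_0'+2/\pi=-\ii\ee^{-\ii|x|}+2/\pi$ is continuous on $\T$. This uses only $s_0^2=1$ and $s_0'=0$ on the pieces.
\item $k''=h_0''=-\sgn(x)\ee^{-\ii|x|}$ is piecewise smooth.
\item Your two integrations by parts together with Lemma~\ref{lem:piecewise-tail} then give $\widehat k(n)=O(n^{-3})$.
\end{itemize}
The ``in particular'' clause is unaffected by the error, since Step~1 with Proposition~\ref{thm:intro-finite-jump} already yields admissibility.

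\textbf{Comparison with the paper.} Once corrected, your argument is valid but takes a different route. The paper computes the Fourier coefficients of $g_0=\ii\ee^{-\ii|x|}$ exactly, obtaining $\widehat{\calG(s_0)}(1)=-1/2$, $\widehat{\calG(s_0)}(n)=-2\ii n/(\pi(n^2-1))$ for even $n\ge2$, and $0$ for odd $n\ge3$, and then expands at infinity. That is shorter and sharper: the odd modes vanish exactly, and the even-mode remainder is explicitly $-2\ii/(\pi n(n^2-1))$. Your route instead explains structurally why the remainder is $O(n^{-3})$ rather than $O(n^{-2})$. It therefore applies verbatim to any mean-zero piecewise-constant datum with $|u_0|$ constant, since then $h_0'=-u_0^2\ee^{-\ii V_0}$ is continuous.
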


\begin{proof}
For the square wave,
\[
  V_0(x)=\partial_x^{-1}s_0(x)=|x|-\frac\pi2,
  \qquad
  g_0(x)=\ee^{-\ii V_0(x)}=\ii\ee^{-\ii|x|}.
\]
The jumps are at \(0\) and \(\pi\), with
\[
  [s_0]_0=2,
  \qquad
  [s_0]_\pi=-2,
\]
and
\[
  g_0(0)=\ii,
  \qquad
  g_0(\pi)=-\ii.
\]
The formula above gives
\[
  \widehat{\calG(s_0)}(n)
  =-\frac1{2\pi n}\,2\ii\,(1+\ee^{-\ii n\pi})+O(n^{-1-\eta}).
\]
A direct computation of the Fourier coefficients of
\(g_0=\ii\ee^{-\ii|x|}\) gives the exact values
\[
  \widehat{\calG(s_0)}(1)=-\frac12,
  \qquad
  \widehat{\calG(s_0)}(n)=
  \begin{cases}
    -\displaystyle\frac{2\ii n}{\pi(n^2-1)},
      & n\ge2\ \text{even},\\[6pt]
    0, & n\ge3\ \text{odd}.
  \end{cases}
\]
Expanding the even coefficients at infinity yields
\[
  \widehat{\calG(s_0)}(n)
  =-\frac{\ii(1+(-1)^n)}{\pi n}+O(n^{-3}).
\]
\end{proof}
\subsection{The square wave and the one-sided parameter}\label{sec:square-parameter}

For the square wave \(s_0(x)\), \eqref{intro-square-tail} shows that the main gauge profile contains only even modes.  Since \(\langle s_0^2\mid1\rangle=1\),
\begin{align*}
  w_L(t,x)
  &=-\frac{\ii\ee^{-\ii t}}{\pi}\sum_{n\ge1}\frac{(1+(-1)^n)\ee^{\ii tn^2}\ee^{\ii nx}}{n}+C_t(x)\\
  &=-\frac{\ii\ee^{-\ii t}}{\pi}\left[
    \calF_{t/(2\pi)}\left(\frac{x}{2\pi}\right)
    +\calF_{t/(2\pi)}\left(\frac{x+\pi}{2\pi}\right)
  \right]+C_t(x).
\end{align*}
Equivalently,
\[
  w_L(t,x)=-\frac{\ii\ee^{-\ii t}}{\pi}\sum_{m\ge1}\frac{\ee^{\ii4tm^2}\ee^{\ii2mx}}{m}+C_t(x).
\]
Thus the square-wave gauge profile may also be described by the one-sided parameter \(4t/(2\pi)\), after the spatial rescaling \(x\mapsto2x\).

\section{Irrational times}\label{sec:irrational}

In this section, we can first easily prove Theorem \ref{thm:intro-irr-criterion}.  From \eqref{wL-admissible-split}, with \(\alpha=t/(2\pi)\),
\begin{equation}\label{wL-F-alpha}
  w_L(t,x)=\ee^{-\ii tM}\sum_{j=1}^J c_j\calF_\alpha\left(\frac{x-a_j}{2\pi}\right)+C_t(x),
  \qquad C_t\in C^0(\T).
\end{equation}
If the finite combination in \eqref{intro-finite-combo} is continuous, then \(w_L(t,\cdot)\in C^0(\T)\).  The representation \eqref{GKT-main} then gives
\[
  u(t)=2\Ree(a_t\,\ii\,w_L(t))+r(t),
  \qquad a_t\in C^\rho(\T),\quad r(t)\in C^0(\T),
\]
so \(u(t,\cdot)\in C^0(\T)\).

\subsection{A finite-type sufficient condition}

We now prove Theorem \ref{thm:intro-finite-type}.  The proof is
self-contained and relies only on a dyadic Weyl's estimate.

\begin{lemma}[Uniform quadratic Weyl's bound]\label{lem:Weyl}
Assume \(\alpha\) satisfies \eqref{finite-type-intro}.  Then there are constants \(C_\alpha>0\) and \(0<\vartheta_\alpha<1\) such that, for every interval \(I\subset\Z\) of length \(N\ge1\) and every \(\beta\in\R\),
\begin{equation}\label{Weyl-bound-main}
  \left|\sum_{n\in I}\ee^{2\pi\ii(\alpha n^2+\beta n)}\right|
  \le C_\alpha N^{\vartheta_\alpha}.
\end{equation}
One may take
\[
  \vartheta_\alpha=1-\frac{1}{2(\kappa_\alpha+1)}.
\]
\end{lemma}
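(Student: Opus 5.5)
We will use the classical Weyl differencing argument, made quantitative with the Diophantine lower bound \eqref{finite-type-intro}. Write \(S=\sum_{n\in I}\ee^{2\pi\ii(\alpha n^2+\beta n)}\) with \(I=\{M+1,\dots,M+N\}\). Squaring and substituting \(n'=n+h\), we get
\[
  |S|^2=\sum_{|h|<N}\ \sum_{n\in I\cap(I-h)}\ee^{2\pi\ii(\alpha(2nh+h^2)+\beta h)},
\]
so the linear phase \(\beta\) and the shift \(M\) only contribute unimodular factors. The inner sum is geometric over an interval of length at most \(N\), hence is bounded by \(\min\bigl(N,\tfrac{1}{2\|2h\alpha\|_{\T_1}}\bigr)\). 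This gives
\[
  |S|^2\le N+2\sum_{h=1}^{N-1}\min\Bigl(N,\frac{1}{2\|2h\alpha\|_{\T_1}}\Bigr)\le N+\sum_{m=1}^{2N}\min\Bigl(N,\frac{1}{\|m\alpha\|_{\T_1}}\Bigr).
\]
This bound is uniform in \(\beta\) and in the position of \(I\).

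It remains to bound \(\Sigma_N:=\sum_{m\le 2N}\min(N,\|m\alpha\|_{\T_1}^{-1})\) using \eqref{finite-type-intro}. We use a spacing argument. Let \(K\ge1\) be a parameter and partition \(\{1,\dots,2N\}\) into blocks of \(K\) consecutive integers. Within one block, two distinct \(m,m'\) satisfy \(\|(m-m')\alpha\|_{\T_1}\ge c_\alpha K^{-\kappa_\alpha}=:\delta\). So the points \(m\alpha \bmod 1\) in a block are \(\delta\)-separated on the circle. Consequently, at most \(O(1)\) of them satisfy \(\|m\alpha\|<\delta\), and the \(k\)-th smallest value of \(\|m\alpha\|\) is \(\gtrsim k\delta\). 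Summing gives, for each block,
\[
  \sum_{m\in\text{block}}\min(N,\|m\alpha\|^{-1})\lesssim N+\delta^{-1}\log(2K).
\]
There are \(O(N/K+1)\) blocks, so
\[
  \Sigma_N\lesssim \Bigl(\frac NK+1\Bigr)\bigl(N+K^{\kappa_\alpha}\log(2K)\bigr).
\]

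The main obstacle is choosing \(K\) so that this beats the trivial bound \(N^2\). We take \(K\approx N^{1/(\kappa_\alpha+1)}\). Then \(N^2/K=N^{2-1/(\kappa_\alpha+1)}\). The other terms are
\[
  \frac NK\cdot K^{\kappa_\alpha}\log K=N\,K^{\kappa_\alpha-1}\log K\le N^{2-\frac{2}{\kappa_\alpha+1}}\log N,
\]
together with \(N\) and \(K^{\kappa_\alpha}\log K\), all of which are smaller. Hence \(|S|^2\lesssim N^{2-1/(\kappa_\alpha+1)}\), which gives \(|S|\lesssim N^{1-\frac{1}{2(\kappa_\alpha+1)}}\).

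The \(\log N\) factor is absorbed into the dominant power as long as \(\kappa_\alpha>0\). If \(\kappa_\alpha\) is small, it can be absorbed by using any slightly larger \(\kappa_\alpha\) in \eqref{finite-type-intro}, which remains valid; this is harmless because Dirichlet's theorem forces \(\kappa_\alpha\ge1\) in any case. For small \(N\) the bound is trivial, since \(|S|\le N\). The implied constants depend only on \(c_\alpha\) and \(\kappa_\alpha\), which gives \eqref{Weyl-bound-main} with \(\vartheta_\alpha=1-\frac{1}{2(\kappa_\alpha+1)}\).

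One point needs care in the block counting. Separation holds only within a block of length \(K\), because the differences \(m-m'\) there are less than \(K\). In addition, \(\delta\)-separated points can have at most two representatives in each window \([k\delta,(k+1)\delta)\) of \(\|\cdot\|\), one on each side of \(0\). Both facts only affect absolute constants.
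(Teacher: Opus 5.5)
Your proof is correct, but it is organized differently from the paper's. The paper applies van der Corput's inequality with a short differencing range $H\approx N^{1/(\kappa_\alpha+1)}$. Since only $1\le h\le H$ occur, each geometric inner sum can be bounded pointwise by $\min(N,\frac{1}{2\|2h\alpha\|_{\T_1}})\lesssim_\alpha h^{\kappa_\alpha}$, and balancing $N^2/H$ against $NH^{\kappa_\alpha}$ gives the exponent at once.

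You instead difference over the full range $|h|<N$. There a pointwise use of the Diophantine condition, $\min(N,\|2h\alpha\|_{\T_1}^{-1})\lesssim_\alpha\min(N,h^{\kappa_\alpha})$, summed over $h<N$, gives nothing better than the trivial $N^2$. That is why you need the averaged estimate for $\sum_{m\le 2N}\min(N,\|m\alpha\|_{\T_1}^{-1})$, which comes from the $c_\alpha K^{-\kappa_\alpha}$-separation of the points $m\alpha$ within blocks of length $K$. This is a Diophantine version of the classical Vinogradov-type lemma, and your block length $K$ plays the role of the paper's $H$.

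The paper's route is shorter and uses the Diophantine condition only at individual frequencies. Yours is longer but stronger: taking $K\approx N^{1/\kappa_\alpha}$ instead of $N^{1/(\kappa_\alpha+1)}$ in your bound $(N/K+1)(N+K^{\kappa_\alpha}\log 2K)$ yields $|S|\lesssim_\alpha N^{1-1/(2\kappa_\alpha)}(\log 2N)^{1/2}$ (recall $\kappa_\alpha\ge1$). For example, this gives $\sqrt{N\log N}$ instead of $N^{3/4}$ for a badly approximable $\alpha$ such as $\sqrt2/2$. For Lemma~\ref{lem:F-finite-type} only $\vartheta_\alpha<1$ matters, so either version suffices.

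Two cosmetic remarks. First, since $2\min(N,\frac{1}{2x})=\min(2N,\frac1x)\le2\min(N,\frac1x)$, your passage to $\sum_{m\le2N}\min(N,\|m\alpha\|_{\T_1}^{-1})$ should carry a factor $2$. Second, your caveat about small $\kappa_\alpha$ is unnecessary: the logarithmic terms $N^{2-2/(\kappa_\alpha+1)}\log N$ and $N^{\kappa_\alpha/(\kappa_\alpha+1)}\log N$ are dominated by $N^{2-1/(\kappa_\alpha+1)}$ for every $\kappa_\alpha>0$.
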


\begin{proof}
By shifting the interval, it suffices to estimate
\[
  S_N(\beta)=\sum_{n=1}^N\ee^{2\pi\ii(\alpha n^2+\beta n)}
\]
uniformly in \(\beta\).  Van der Corput's inequality gives, for \(1\le H\le N\),
\[
  |S_N(\beta)|^2\lesssim
  \frac{N^2}{H}+\frac{N}{H}\sum_{h=1}^H
  \left|\sum_{n=1}^{N-h}\ee^{2\pi\ii(\alpha((n+h)^2-n^2)+\beta h)}\right|.
\]
The inner phase is linear in \(n\):
\[
  \alpha((n+h)^2-n^2)+\beta h=2\alpha hn+\alpha h^2+\beta h.
\]
Thus the inner sum is geometric and is bounded by
\[
  \min\left(N,\frac1{2\|2\alpha h\|_{\T_1}}\right).
\]
The finite-type condition implies
\[
  \|2\alpha h\|_{\T_1}=\|(2h)\alpha\|_{\T_1}\ge c_\alpha(2h)^{-\kappa_\alpha},
\]
so the inner sum is \(O_\alpha(h^{\kappa_\alpha})\).  Hence
\[
  |S_N(\beta)|^2\lesssim_\alpha \frac{N^2}{H}+NH^{\kappa_\alpha}.
\]
Choosing \(H=\lfloor N^{1/(\kappa_\alpha+1)}\rfloor\) proves \eqref{Weyl-bound-main}, with small \(N\) absorbed into the constant.
\end{proof}

\begin{lemma}[Continuity of \(\calF_\alpha\) at finite-type times]\label{lem:F-finite-type}
If \(\alpha\) is irrational of finite Diophantine type, then \(\calF_\alpha\in C^0(\T_1)\).  More precisely, the ordinary partial sums of \eqref{F-alpha-intro} converge uniformly in \(\beta\).
\end{lemma}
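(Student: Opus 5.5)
The plan is to combine Lemma \ref{lem:Weyl} with Abel summation (summation by parts), which gives a uniform Cauchy criterion for the partial sums of \eqref{F-alpha-intro}. Write
\[
  e_n(\beta):=\ee^{2\pi\ii(\alpha n^2+\beta n)},\qquad
  E(M,N;\beta):=\sum_{n=M+1}^{N}e_n(\beta).
\]
Lemma \ref{lem:Weyl} gives \(|E(M,N;\beta)|\le C_\alpha (N-M)^{\vartheta_\alpha}\le C_\alpha N^{\vartheta_\alpha}\), with \(C_\alpha\) independent of \(\beta\).

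Fix \(M<N\). Summation by parts with the weights \(1/n\) gives
\[
  \sum_{n=M+1}^{N}\frac{e_n(\beta)}{n}
  =\frac{E(M,N;\beta)}{N}
  +\sum_{n=M+1}^{N-1}E(M,n;\beta)\Bigl(\frac1n-\frac1{n+1}\Bigr).
\]
The boundary term is bounded by \(C_\alpha N^{\vartheta_\alpha-1}\). The sum is bounded by
\[
  C_\alpha\sum_{n>M}n^{\vartheta_\alpha}\,n^{-2}\lesssim C_\alpha M^{\vartheta_\alpha-1}.
\]
Both bounds are uniform in \(\beta\) and tend to \(0\) as \(M\to\infty\), because \(\vartheta_\alpha<1\). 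Hence the partial sums \(\sum_{n\le N}e_n(\beta)/n\) are uniformly Cauchy on \(\T_1\).

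Each partial sum is a trigonometric polynomial in \(\beta\), so it is continuous and \(1\)-periodic. The uniform limit is therefore a continuous function on \(\T_1\). This limit agrees with \(\calF_\alpha\) in the distributional sense, since the series also converges in \(L^2\) (its coefficients \(1/n\) are square summable), and \(L^2\) limits coincide with uniform limits. This yields the continuous representative.

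The only real input is the uniform Weyl bound with exponent \(\vartheta_\alpha<1\). That is where the finite-type hypothesis enters, and it has already been established in Lemma \ref{lem:Weyl}. The one point requiring care is that the bound must hold for sums over arbitrary intervals \(\{M+1,\dots,n\}\) and uniformly in \(\beta\); Lemma \ref{lem:Weyl} is stated in exactly that form. Beyond this, the argument is routine.
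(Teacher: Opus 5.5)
Your proof is correct and uses the same two ingredients as the paper: the $\beta$-uniform Weyl bound of Lemma~\ref{lem:Weyl} and Abel summation against the weights $1/n$. The only difference is organizational. The paper splits $S_{N'}-S_N$ into dyadic blocks and sums $\sum_{j\ge J}2^{-j(1-\vartheta_\alpha)}$, whereas you sum by parts over the whole range $\{M+1,\dots,N\}$ at once using $|E(M,n;\beta)|\le C_\alpha n^{\vartheta_\alpha}$, which is slightly more direct.
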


\begin{proof}
Let \(I_j=\{2^j,\dots,2^{j+1}-1\}\).  For any subinterval \(I\subset I_j\), Abel summation and Lemma \ref{lem:Weyl} imply
\begin{equation}\label{subblock-main}
  \sup_{\beta\in\R}\left|\sum_{n\in I}\frac{\ee^{2\pi\ii(\alpha n^2+\beta n)}}{n}\right|
  \lesssim_\alpha 2^{-j(1-\vartheta_\alpha)}.
\end{equation}
Indeed, partial sums of \(\ee^{2\pi\ii(\alpha n^2+\beta n)}\) over subintervals of \(I_j\) are \(O_\alpha(2^{j\vartheta_\alpha})\), while the total variation of \(1/n\) over \(I_j\), including the endpoint contribution, is \(O(2^{-j})\).

Let \(S_N(\beta)\) denote the ordinary partial sums of \(\calF_\alpha\).  If \(N<N'\), decompose \(S_{N'}-S_N\) into at most two partial dyadic blocks and a collection of full dyadic blocks.  By \eqref{subblock-main}, if \(N,N'\ge2^J\), the tail is bounded by
\[
  C_\alpha\sum_{j\ge J}2^{-j(1-\vartheta_\alpha)},
\]
which tends to zero uniformly in \(\beta\) because \(\vartheta_\alpha<1\).  Hence the ordinary partial sums converge uniformly to a continuous function.
\end{proof}

Theorem \ref{thm:intro-finite-type} follows from Theorem \ref{thm:intro-irr-criterion} and Lemma \ref{lem:F-finite-type}.  The full-measure assertion follows from the Borel--Cantelli lemma: for every fixed \(\kappa>1\), almost every \(\alpha\) satisfies \(\|m\alpha\|_{\T_1}\ge c_\alpha m^{-\kappa}\) for some \(c_\alpha>0\).

\begin{remark}[Sharper arithmetic inputs]
The finite-type condition is a convenient sufficient condition, not an
optimal classification.  The precise nonlinear transfer statement is
Theorem~\ref{thm:intro-irr-criterion}: any arithmetic theorem that yields
continuity of the finite combination \eqref{intro-finite-combo}
immediately gives continuity of the Benjamin--Ono solution.
Rivoal--Seuret obtain sharper
continued-fraction criteria for related Hardy--Littlewood series
\cite{RivoalSeuret}.  To use such criteria in
Theorem~\ref{thm:intro-irr-criterion}, one would additionally have to
verify continuity, or an appropriate uniform tail estimate, in the linear
parameter \(\beta\).  We do not carry out that verification here.
\end{remark}

\subsection{A Liouville obstruction for the one-sided series}\label{sec:liouville}

In this section, we prove Proposition \ref{prop:intro-liouville}.  
\begin{proof}[Proof of Proposition \ref{prop:intro-liouville}]
Write \(e(y)=\ee^{2\pi\ii y}\), and for \(|z|<1\) set
\[
  F_\alpha(z)=\sum_{n\ge1}\frac{e(\alpha n^2)}{n}z^n.
\]
If \(\calF_\alpha\) had a continuous boundary value, the Abel means \(F_\alpha(r)\) would remain bounded as \(r\uparrow1\).  We construct \(\alpha\) for which this fails.

Let \(p/q\in\Q\), \((p,q)=1\), with \(q\) odd.  Set
\[
  a_n=e\left(\frac{pn^2}{q}\right).
\]
This is \(q\)-periodic, with mean
\[
  \mu_{p,q}=\frac1q\sum_{a=0}^{q-1}e\left(\frac{pa^2}{q}\right).
\]
The quadratic Gauss sum estimate gives
\begin{equation}\label{Gauss-main}
  |\mu_{p,q}|=q^{-1/2}.
\end{equation}
Let \(r_N=\ee^{-1/N}\).  We claim
\begin{equation}\label{rational-Abel-main}
  \sum_{n\ge1}\frac{e(pn^2/q)}{n}r_N^n=\mu_{p,q}\log N+O(q),
\end{equation}
 uniformly for \(N\ge2\).  Indeed, write \(a_n=\mu_{p,q}+b_n\), where \(b_n\) has zero mean and is \(q\)-periodic.  Then
\[
  \sum_{n\ge1}\frac{r_N^n}{n}=-\log(1-r_N)=\log N+O(1),
\]
while the partial sums of \(b_n\) are \(O(q)\).  Abel's summation with the decreasing weights \(r_N^n/n\) gives
\[
  \sum_{n\ge1}\frac{b_n r_N^n}{n}=O(q),
\]
which proves \eqref{rational-Abel-main}.

Assume now that
\begin{equation}\label{super-approx-main}
  \left|\alpha-\frac pq\right|\le\ee^{-4q^2}.
\end{equation}
Take \(N=\ee^{q^2}\).  Since
\[
  |e(\alpha n^2)-e(pn^2/q)|\lesssim \left|\alpha-\frac pq\right|n^2,
\]
we have
\[
  \left|\sum_{n\ge1}\frac{e(\alpha n^2)-e(pn^2/q)}{n}r_N^n\right|
  \lesssim \left|\alpha-\frac pq\right|\sum_{n\ge1}n r_N^n
  \lesssim \ee^{-4q^2}N^2=\ee^{-2q^2}.
\]
Combining this with \eqref{rational-Abel-main} and \eqref{Gauss-main},
\[
  |F_\alpha(r_N)|\ge q^{-1/2}\log N-O(q)-O(\ee^{-2q^2})=q^{3/2}-O(q).
\]
Thus \(F_\alpha(r_N)\) is large whenever \(\alpha\) has an approximation satisfying \eqref{super-approx-main} with large odd denominator.

It remains to construct an irrational \(\alpha\) with infinitely many such approximations.  Choose integers \(M_j\) increasing so fast that
\[
  3^{-M_{j+1}}<\ee^{-5\cdot3^{2M_j}}.
\]
Set
\[
  \alpha=\sum_{j=1}^\infty 3^{-M_j}.
\]
Its base-three expansion has a digit \(1\) at every position \(M_j\) and
zeros elsewhere.  Because the gaps \(M_{j+1}-M_j\) are unbounded, this
expansion is neither terminating nor eventually periodic; hence
\(\alpha\) is irrational.
Let
\[
  \frac{p_j}{q_j}=\sum_{\ell=1}^j3^{-M_\ell},
  \qquad q_j=3^{M_j}.
\]
Then \(q_j\) is odd and \((p_j,q_j)=1\), because \(p_j\equiv1\pmod3\).  Moreover
\[
  \left|\alpha-\frac{p_j}{q_j}\right|
  \le 2\cdot3^{-M_{j+1}}<\ee^{-4q_j^2}
\]
for all large \(j\).  Therefore the Abel means \(F_\alpha(r_{N_j})\), with \(N_j=\ee^{q_j^2}\), are unbounded.  Hence \(\calF_\alpha\notin C^0(\T_1)\).
\end{proof}

\section{Compatibility with numerical evidence}\label{sec:compatibility}

This section explains how the results proved above compare with the numerical
profiles of Alama Bronsard--Laurens \cite{BL}.  The sharp features in their
finite-resolution plots are qualitatively compatible with the analytic
jump/logarithmic kernels in our theorem.  Since their convergence result is
in \(L^2\), the plots are not used here as pointwise evidence for the
existence, location, or amplitude of a singularity.

Alama Bronsard--Laurens consider the square-wave initial datum
\[
  s_0(x)=\operatorname{sgn}(x),
\]
and plot finite-dimensional approximations to both the linearized and
nonlinear Benjamin--Ono evolutions.  In their Figure~1, the first three
panels correspond to the rational times
\[
  t=\frac{\pi}{2},\qquad t=\frac{\pi}{3},\qquad t=\frac{\pi}{6},
\]
and the last panel corresponds to the irrational time
\[
  t=\sqrt2\,\pi.
\]
At the displayed truncation \(K=2^{10}\), both plotted curves are smooth,
bounded trigonometric polynomials.
The nonlinear curves at rational times display a finite number of sharp
features.  These features are qualitative visual evidence only; the finite
jump/logarithmic-kernel representation follows independently from
Theorem~\ref{thm:intro-rational}, Proposition~\ref{thm:intro-finite-jump},
and the explicit tail computation \eqref{intro-square-tail}.

Let us spell this out.  Proposition~\ref{thm:intro-finite-jump} shows that
finite-jump, piecewise smooth BV data are Talbot-admissible.  The square wave is
the basic example, and its initial gauge profile satisfies
\[
  \widehat{\calG(s_0)}(n)
  =
  -\frac{\ii(1+(-1)^n)}{\pi n}
  +
  O(n^{-3}).
\]
Thus Theorem~\ref{thm:intro-rational} applies to the square wave at every rational
time.  It gives, in the sense of distributions,
\[
  u(t,x)=
  \sum_\nu
  \left(
    A_\nu(t)\mathcal C_{\xi_\nu(t)}(x)
    +
    B_\nu(t)\mathcal J_{\xi_\nu(t)}(x)
  \right)
  +
  R_t(x),
  \qquad
  R_t\in C^0(\T).
\]
Here
\[
  \mathcal C_\xi(x)
  =
  -\log\left|2\sin\frac{x-\xi}{2}\right|
\]
is the logarithmic kernel, while
\[
  \mathcal J_\xi(x)
  =
  \frac{\pi-y_\xi(x)}2,
  \qquad
  y_\xi(x)\in(0,2\pi),\quad y_\xi(x)\equiv x-\xi\pmod{2\pi},
\]
is the one-jump kernel.  Hence the theorem gives, up to a continuous
background, a finite jump/logarithmic-kernel representation.  The sharp
features in the rational-time panels of \cite{BL} are qualitatively
compatible with this representation, but the \(L^2\)-convergence theorem
does not identify pointwise singularities or their amplitudes.

In the square-wave case it is more efficient to use the even-mode
representation in Section~\ref{sec:square-parameter}.  If
\[
  \frac{t}{2\pi}=\frac pq,
  \qquad (p,q)=1,
\]
write
\[
  \frac{4p}{q}=\frac{p_\ast}{q_\ast}
\]
in lowest terms, so that
\[
  q_\ast=\frac{q}{\gcd(q,4)}.
\]
The quadratic phase is \(q_\ast\)-periodic in the even-mode variable, and
the candidate singular set is contained in
\[
  x=\frac{\pi\ell}{q_\ast},
  \qquad 0\le\ell<2q_\ast,
\]
modulo \(2\pi\).  Thus the three panels \(t=\pi/2,\pi/3,\pi/6\) have minimal candidate
grids of sizes \(2,6,6\), respectively.  The theorem alone does not imply
that every candidate coefficient is nonzero.

It is important that Theorem~\ref{thm:intro-rational} does not claim that the
nonlinear profile is the same finite superposition as the linear Benjamin--Ono profile.
For the linear evolution of the square wave, rational-time profiles can be
written in terms of translates of \(s_0\) and \(Hs_0\).  In the nonlinear problem,
the candidate singular kernels are first organized in the Tao's gauge profile and are then
transferred back to \(u\) through the factor
\[
  a_t(x)=\ee^{\ii\partial_x^{-1}u(t,x)}.
\]
Proposition~\ref{thm:holder-transfer} shows that this factor preserves the
candidate singular location and the class spanned by the logarithmic and
jump kernels:
\[
  a_t(x)L_\xi(x)
  =
  a_t(\xi)L_\xi(x)+C_\xi(x),
  \qquad
  C_\xi\in C^0(\T).
\]
At the level of the complex boundary kernel, the coefficient is multiplied
by the nonlinear value \(a_t(\xi)\).  After taking the real part, this
operation may mix the logarithmic and jump kernels or cancel either real
coefficient.  This provides a mechanism compatible with the nonlinear
curves in \cite{BL} having the same classes of sharp features as the linear
curves while exhibiting different amplitudes and a different continuous
background.

This point also clarifies why the usual smoothing-transfer argument for NLS and
KdV is not the mechanism at work here.  The finite-resolution plot in
\cite{BL} shows a steep transition at \(t=\pi/2\), suggesting that the
limiting nonlinear-minus-linear profile may be discontinuous.  The
results above provide a mechanism compatible with this possibility: the
nonlinear gauge factor may change the jump and logarithmic coefficients.
Without proving that at least one nonlinear/linear coefficient difference
is nonzero, however, we do not infer such a discontinuity from the plot.
What is preserved by Theorem~\ref{thm:intro-rational} is the finite
singular-kernel representation, not the linear revival formula.

The general BV result, Theorem~\ref{thm:intro-BV}, should be interpreted in a
slightly different way.  It gives a rational-time gauge-Hardy representation for
arbitrary BV data, but it does not assert that a general BV datum produces only
finitely many jump or logarithmic singularities.  This distinction is natural:
a general BV function may have infinitely many jumps or a singular continuous
part.  The finite singular-kernel representation becomes available once the
initial gauge profile has a finite edge expansion, which is exactly the Talbot-admissibility
condition used in Theorem~\ref{thm:intro-rational}.  Proposition~\ref{thm:intro-finite-jump}
then verifies this condition for finite-jump, piecewise smooth data.

Finally consider the irrational panel in \cite{BL}, where
\[
  t=\sqrt2\,\pi,
  \qquad
  \frac{t}{2\pi}=\frac{\sqrt2}{2}.
\]
This number is a quadratic irrational and hence is of finite Diophantine type.
Therefore Theorem~\ref{thm:intro-finite-type} applies to Talbot-admissible data,
and in particular to the square wave.  It gives
\[
  u(t,\cdot)\in C^0(\T).
\]
The finite-resolution panel shows no visible macroscopic jump or
logarithmic blow-up and is qualitatively compatible with this continuity
conclusion.

The finite-type condition in Theorem~\ref{thm:intro-finite-type} is not meant to
suggest that all irrational times should be treated in the same way as in the
KdV or NLS Talbot theory.  The proof here reduces the irrational-time question to
a one-sided quadratic logarithmic series.  Proposition~\ref{prop:intro-liouville}
shows that this one-sided series can fail to be continuous for certain Liouville
irrational parameters.  Thus the finite-type theorem gives a rigorous
full-measure class of irrational times for which the present gauge-profile
method proves continuity, while the Liouville obstruction explains why an
unconditional all-irrational statement would require an additional cancellation
mechanism.

In short, the numerical evidence in \cite{BL} and the results proved here
are qualitatively compatible for square-wave data.  At rational times, the
nonlinear Benjamin--Ono profile admits, modulo a continuous function, a
finite jump/logarithmic-kernel representation.  Thus all possible
singularities lie in a finite set; nonvanishing at an individual candidate
point requires a separate coefficient calculation.  At finite-type
irrational times, the profile is continuous.  The mechanism is not a
continuous correction to the linear Benjamin--Ono evolution, but rather a
quadratic Talbot profile in Tao's gauge variable, transferred back to
\(u\) by a H\"older multiplier and then corrected by a continuous
remainder.

\end{document}